\newcommand{\xx}{{\bf x}}
\newcommand{\yy}{{\bf y}}
\newcommand{\NN}{{\mathbb N}}
\newcommand{\RR}{{\mathbb R}}
\newcommand{\cC}{{\cal C}}
\newcommand{\cL}{{\cal L}}
\newcommand{\cT}{{\cal T}}
\newcommand{\1}{{\bf 1}}
\newcommand{\ds}{\displaystyle}
\newtheorem{defn}{Definition}
\newtheorem{thm}{Theorem}
\newtheorem{lemma}{Lemma}
\newtheorem{cor}{Corollary}
\newtheorem{rem}{Remark}
\newtheorem{algo}{Algorithm}
\renewenvironment{proof}{\noindent{\bf Proof:} }{\hfill $\square$ \\ }
\begin{document}
\thispagestyle{empty}

\begin{center}
{\LARGE Shadow Simulated Annealing algorithm: a new tool for global optimisation and statistical inference}\\[.5in]

{\large  R. S. Stoica$^{1}$}, M. Deaconu$^{2}$, A. Philippe$^{3}$, L. Hurtado$^{4}$\\[0.6in]

{\large $^{1}$ Université de Lorraine, CNRS, IECL, F-54000, Nancy, France}\\
{\large $^{2}$ Université de Lorraine, CNRS, Inria, IECL, F-54000, Nancy, France}\\
{\large $^3$ Université de Nantes, Laboratoire de Mathématiques Jean Leray, Nantes, France}\\
{\large $^{4}$ Departamento de Matem\'atica Aplicada y Estad\'{\i}stica, Universidad CEU San Pablo, 28003 Madrid, Spain}
\end{center}

\begin{verse}
{\footnotesize \noindent {{\bf Abstract}: This paper develops a new global optimisation method that applies to a family of criteria that are not entirely known. This family includes the criteria obtained from the class of posteriors that have normalising constants that are analytically not tractable. The procedure applies to posterior probability densities that are continuously differentiable with respect to their parameters. The proposed approach avoids the re-sampling needed for the classical Monte Carlo maximum likelihood inference, while providing the missing convergence properties of the ABC based methods. Results on simulated data and real data are presented. The real data application fits an inhomogeneous area interaction point process to cosmological data. The obtained results validate two important aspects of the galaxies distribution in our Universe~: proximity of the galaxies from the cosmic filament network together with territorial clustering at given range of interactions. Finally, conclusions and perspectives are depicted.\\
}}
\end{verse}
\noindent {\em 2000 Mathematics Subject Classification:} 60J22,60G55
\newline
{\em Keywords and Phrases:} global optimisation, non-homogeneous Markov chains, computational methods in Markov chains, maximum likelihood estimation, point processes, spatial pattern analysis.

\section{Introduction}
\noindent
A large class of the mathematical questions issued from data sciences can be formulated as an optimisation problem. The complexity of the data structures requires more and more elaborate models with an important number of parameters controlling different aspects outlined by the data observation process. Within this context, a typical a question is, what is the most probable model able to reproduce the behaviour exhibited by the analysed data. Clearly, the possible answer to this question assumes the existence of a class of models together with priors associated to its parameters.\\

\noindent
A natural way to answer this question is the computation of the global maximum of the induced posterior distribution. The classical simulated annealing framework proposes the solution to this problem, provided sampling from the posterior distribution is possible.\\

\noindent
Sampling posterior distributions is still a challenging mathematical problem. This is due to the fact that sometimes, the proposed simulation algorithms may be required to make computations of analytical intractable quantities. An example of such quantities is the evaluation of normalisation constants of the probability densities describing the considered models.\\

\noindent
If only parameters estimation is considered, the common solution adopted is to provide maximum likelihood computations based on Monte Carlo simulations. This framework allows the user to benefit of the whole theoretical power of the likelihood inference, if one afford the price to pay in terms of computational costs. The computational cost may be excessively high whenever the initial condition is to far away from the desired solution. Furthermore this phenomenon introduces numerical instability of the proposed solution. The only reliable strategy within this context is to re-sample the model, as often as possible, hence increasing the computational cost.\\

\noindent
Since less than a decade, a new methodological framework for statistical inference, the Approximate Bayesian Computation (ABC) allows to extend the Monte Carlo likelihood based inference, by providing solutions for sampling approximately from the posterior distribution. The authors in~\cite{StoiEtAl17} proposed a new algorithm, ABC Shadow, that overcomes the main drawback of the previously mentioned ABC methods: the ABC Shadow allows the output distribution of the algorithm, to be as closed as desired to the aimed posterior distribution.\\

\noindent
The work presented in this paper leads directly to a new method of parameter estimation based on a simulated annealing algorithm. To better outline its interest, let us consider the following example, inspired by applications in spatial data analysis.\\

\noindent
Let $\yy$ be an object pattern that is observed in a compact window $W \subset \RR^{d}$. The observed pattern is supposed to be the realisation of  a spatial process. Such a process is given by the probability density
\begin{equation}
p(\yy|\theta)=\frac{\exp[-U(\yy|\theta)]}{\zeta(\theta)}
\label{gibbsDistribution}
\end{equation}
with $U(\yy|\theta)$ the energy function and $\zeta(\theta)$ the normalising constant. The model given by~\eqref{gibbsDistribution} may be considered as a Gibbs process, and it may represent a random graph, a Markov random field or a marked point process. Let $p(\theta|\yy)$ be the conditional distribution of the model parameters or the posterior law
\begin{equation} 
p(\theta|\yy) = \frac{\exp[-U(\yy|\theta)]p(\theta)}{Z(\yy)\zeta(\theta)},
\label{posteriorGibbs}
\end{equation}
where $p(\theta)$ is the prior density for the model parameters and $Z(\yy)$ the normalising constant. The posterior law is defined on the parameter space $\Theta$. For simplicity, the parameter space is considered to be a compact region in $\RR^{r}$ with $r$ the size of the parameter vector. Let $\nu$ be the corresponding Lebesgue measure. The parameter space is endowed with its Borel algebra $\cT$.\\

\noindent
In the following, it is assumed that the probability density $p(\yy|\theta)$ is strictly positive and continuously differentiable with respect to $\theta$. This hypothesis is strong but keeps realistic, since it is often required by practical applications.\\

\noindent
This paper constructs and develops a simulated annealing method to compute~:
\begin{equation*}
\widehat{\theta} = \arg \max_{\theta \in \Theta} p(\theta|\yy).
\end{equation*}
The difficulty of the problem is due to the fact that the normalising constant $\zeta(\theta)$ is not available in analytic closed form. Hence, special strategies are required to sample from the posterior distribution~\eqref{posteriorGibbs} in order to implement an optimisation procedure.\\

\noindent
The plan of the paper is as follows. First, an Ideal Markov chain (IC) is constructed. This chain has as equilibrium distribution the posterior distribution of interest. So, in theory, this chain can be used to sample from the distribution of interest. Next, an Ideal Simulated Annealing (ISA) process built using a non-homogeneous IC chain is constructed. The convergence properties of the process are also given. Despite the good theoretical properties, the ISA process cannot be used to build algorithms for practical use, since it requires the computation of normalising constants of the form $\zeta(\theta)$. The fourth section presents a solution to this problem. First an approximate sampling mechanism called the Shadow chain (SC) is presented. The SC  is able to follow closely within some fixed limits the IC. Based on the SC chain, a Shadow Simulated Annealing (SSA) process is built. This process is controlled by two parameters, evolving slowly to zero. This double control allows to derive convergence properties of the process towards the global optimum we are interested in. These theoretical results allow the construction of a SSA algorithm. The algorithm is applied to simulated and real data, during the fifth section. The real data application fits an inhomogeneous point process with interactions to a cosmological data set, in order to obtain essential characteristics of the galaxies distribution in our Universe. At the end of the paper, conclusions and perspectives are formulated.\\

\section{Ideal Chain for posterior sampling}
\noindent
In theory, Markov Chain Monte Carlo algorithms may be used for sampling $p(\theta | \yy)$. For instance, let us consider the general Metropolis  Hasting algorithm. Assuming the system is in the state $\theta$, this algorithm first chooses a new value $\psi$ according to a proposal density $q(\theta \rightarrow \psi)$. The value $\psi$ is then accepted with probability $\alpha_{i}(\theta \rightarrow \psi)$ given by
\begin{equation}
\alpha_{i}(\theta \rightarrow
\psi)=\min_{\psi}\left\{1,\frac{p(\psi|\yy)}{p(\theta|\yy)}\frac{q(\psi
\rightarrow \theta)}{q(\theta \rightarrow \psi)}\right\}.
\label{acceptance_probability_ideal}
\end{equation}

\noindent 
The transition kernel of the Markov chain simulated by this algorithm is given by, for every $ A \in  \cT$
\begin{equation}
\label{kernel}
\begin{array}{ll}
P_i(\theta,A) & = \ds\int_{A}\alpha_{i}(\theta \rightarrow
\psi)q(\theta \rightarrow \psi)\1_{\{\psi \in A\}}d\psi\\
& \quad + \, \1_{\{\theta \in A\}}
\left[1 - \ds\int_A   \alpha_i(\theta\rightarrow \psi) q(\theta \rightarrow \psi) d\psi \right].
\end{array}
\end{equation}

\noindent
 Let us recall that the transition kernel of a Markov chain may act on both functions and measures, as it follows
\begin{equation}
    \label{kernelP} 
    P_{i}f(x) =\ds\int_\Theta  P_{i}(x,dy)f(y), \quad  \mu P_{i}(A) = \int_{\Theta}\mu(d\theta)P_{i}(\theta,A).
\end{equation}
\noindent
The conditions that the proposal density $q(\theta \rightarrow \psi)$ has to meet, so that the simulated Markov chain has a unique equilibrium distribution
\begin{equation*}
\pi(A)=\int_{A}p(\theta|\yy)d\nu(\theta),
\end{equation*}
\noindent
are rather mild~\cite{Tier94}. Furthermore, if $q$ and $\pi$ are bounded and bounded away from zero on the compact $\Theta$, then the simulated chain is uniformly ergodic~(\cite{Tier94} Prop. 2, \cite{RobeTwee96} Thm. 2.2). Hence, there exist a positive constant $M$ and a positive constant $\rho < 1$ such that
\begin{equation*}
\sup_{\theta \in \Theta}\parallel P_{i}^{n}(\theta,\cdot) - \pi(\cdot) \parallel \leq M\rho^{n}, \quad n \in \NN.
\end{equation*}

\noindent
For a fixed $\delta > 0$, a parameter value $\nu \in \Theta$ and a realisation $\xx$ of the model $p(\cdot|\nu)$ given by~\eqref{gibbsDistribution}, let us consider the proposal density
\begin{eqnarray}
q(\theta \rightarrow \psi) = q_\delta (\theta \rightarrow \psi |
\xx) = \frac{f(\xx |\psi)/\zeta(\psi)}{I(\theta, \delta,
\xx)}\1_{b(\theta, \delta/2)}\{\psi\} 
\label{idealProposal}
\end{eqnarray}
with $f(\xx|\psi) = \exp[-U(\xx|\psi)]$. Here $\1_{b(\theta, \delta/2)}\{\cdot\}$ is the indicator function over $b(\theta, \delta/2)$, which is the ball of center $\theta$ and radius $\delta/2$. Finally, $I(\theta, \delta, \xx)$ is the quantity given by the integral 
\begin{equation*}
I(\theta, \delta, \xx) = \int_{b(\theta, \delta/2)} \ds\frac{f(\xx|\phi)}{\zeta(\phi)} \, d \phi.
\end{equation*}

\noindent
This choice for $q(\theta \rightarrow \psi)$ guarantees the convergence of the chain towards $\pi$ and avoids the evaluation of the normalising constant ratio $\zeta(\theta)/\zeta(\psi)$ in~\eqref{acceptance_probability_ideal}. We call the chain induced by these proposals the {\it ideal chain}. Nevertheless, the proposal~\eqref{idealProposal} requires the computation of integrals such as $I(\theta, \delta, \xx)$, and this is as difficult as the computation of the normalising constant ratio. Later in the paper it will be shown how this construction allows a natural approximation of the ideal chain: the shadow chain.

\section{Ideal Simulated Annealing process}
\subsection{Principle}
\noindent
The construction and the properties of a {\it Simulated Annealing} (SA) process in a general state space were investigated by~\cite{HaarSaks91,HaarSaks92}. \\

\noindent
The SA process is built with the following ingredients: a function to optimise $h$, a Markov transition kernel $P$ with equilibrium distribution $\pi \propto \exp(-h)$ and a cooling schedule for the temperature parameter $T$.\\

\noindent
The function $h$ is assumed to be continuous differentiable in $\theta$ and its global maximum is $\theta_{opt}$. In the present situation $h : \Theta \rightarrow \RR$ is obtained by taking the logarithm of~\eqref{posteriorGibbs}:
\begin{equation}
h(\theta) =  U(\yy|\theta) + \log \zeta(\theta) - \log p(\theta) + \log Z(\yy).
\label{functionH}
\end{equation}
It represents the loglikelihood function to which the prior term $\log p(\theta)$ is added. Clearly, if $p(\theta)$ is the uniform distribution over $\Theta$, then maximising $h$ leads to the maximum likelihood estimation.  The transition kernel $P$ is given by \eqref{kernel}. The cooling schedule for the temperature is a logarithmic one, and it results from the proofs of the SA convergence.\\

\noindent
The SA process simulates iteratively a sequence of distributions $\pi_n$
\begin{equation}
\label{pin}
\pi_n(A)=
\ds\int_A p_{n}(\theta)d\nu(\theta) =
\ds\int_A\ds\frac{\exp(-h(\theta)/T_n)}{c_n}d\nu(\theta) 
\end{equation}
with $p_n(\theta) = \frac{\exp(-h(\theta)/T_n)}{c_n}$ and $c_n=\int_\Theta
\exp(-h(\theta)/T_n)d\nu(\theta)$, while $T_n$ goes slowly to zero.\\

\noindent
Each distribution $\pi_n$ is simulated using a transition kernel $P_n$ having it as equilibrium distribution. The transition kernel $P_n$ is obtained, by modifying~\eqref{kernel} in order to sample from $\pi_n$. The kernels sequence $(P_n)_{n \geq 0}$ induces an inhomogeneous Markov chain.\\

\noindent
At low temperatures, the process converges weakly towards the global optimum of $h$, that is 
\begin{equation}
\label{pin-1}
\pi_n \underset{n \rightarrow \infty}{\longrightarrow} \delta_{\theta_{opt}}
\end{equation}
with $\delta_{\theta_{opt}}$ the Dirac measure in $\theta_{opt}$, while considering the Hausdorff topology~\cite{HaarSaks91,HaarSaks92}.\\

\subsection{Definition, properties and convergence}
\noindent
In order to define the SA process and to present its main convergence result, the Dobrushin coefficient, its properties and some extra-notations are introduced.\\

\noindent
Let $\Theta=(\Theta,\cT,\upsilon)$ be a state space with $\upsilon$ a probability measure on $\cT$ and let $\Upsilon = \Upsilon(\Theta)$ be the set of all probability measures on the space $(\Theta,\cT)$. Throughout the entire, the norm $\parallel \cdot \parallel$ is the total variation norm, that is for any $\mu \in \Upsilon$:
\begin{eqnarray}
\label{totalvariation}
\parallel \mu \parallel & = & \sup_{A \in \cT} |\mu(A)| \nonumber\\
& = & \sup_{|g|< 1} |\mu(g)| = \sup_{|g|< 1} |\int_{\Theta} g(\theta)\mu(d\theta)|.
\label{eq:totalVariationNorm}
\end{eqnarray}

\noindent
Let $P$ be a transition kernel on $\Theta$. The Dobrushin coefficient is defined as :
\begin{equation}
\label{dobrushin-def}
c(P) = \ds\sup_{\theta,\psi \in \Theta} \parallel P(\theta,\cdot)- P(\psi,\cdot) \parallel=\ds\sup_{\theta,\psi\in \Theta}\sup_{A\in \cT}|P(\theta,A)-P(\psi,A)|.
\end{equation}

\begin{lemma} The Dobrushin coefficient, defined by \eqref{dobrushin-def}, verifies the following properties:
\begin{enumerate}[label=(\roman*)]
\item $0 \leq c(P) \leq 1$,
\item $\parallel \mu P - \lambda P \parallel \leq c(P)\parallel \mu - \lambda \parallel $,
\item $c(P_1 P_2) \leq c(P_1) c(P_2)$.
\end{enumerate}
\end{lemma}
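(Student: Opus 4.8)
The plan is to treat (i) as an immediate consequence of the definition, to reduce (iii) to (ii), and to place the real work in (ii). For (i), note that $c(P)$ is a supremum of total-variation norms, each of which is itself a supremum of absolute values, so it is manifestly nonnegative. For the upper bound I would observe that for every $A \in \cT$ the numbers $P(\theta,A)$ and $P(\psi,A)$ both lie in $[0,1]$, so their difference is at most $1$ in absolute value; taking the supremum over $A$ and then over the pair $(\theta,\psi)$ gives $c(P)\le 1$.

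For (ii) I would set $\eta = \mu - \lambda$, a finite signed measure with $\eta(\Theta) = \mu(\Theta) - \lambda(\Theta) = 0$ since $\mu,\lambda \in \Upsilon$. Invoking the Hahn--Jordan decomposition $\eta = \eta^+ - \eta^-$, the total-mass-zero property forces a common mass $m := \eta^+(\Theta) = \eta^-(\Theta)$, and moreover $\|\eta\| = \sup_{A}|\eta(A)| = m$, the supremum being attained on the Hahn positive set. If $m=0$ then $\eta = 0$ and the inequality is trivial, so assume $m>0$ and write $\eta^+ = m\,\tilde\mu$ and $\eta^- = m\,\tilde\lambda$ with $\tilde\mu,\tilde\lambda \in \Upsilon$ genuine probability measures. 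Then for every $A$,
\[\eta P(A) = \int_\Theta P(\theta,A)\,\eta(d\theta) = m\int_\Theta\!\int_\Theta \big[P(\theta,A) - P(\psi,A)\big]\,\tilde\mu(d\theta)\,\tilde\lambda(d\psi),\]
and bounding the integrand by $|P(\theta,A)-P(\psi,A)| \le \|P(\theta,\cdot)-P(\psi,\cdot)\| \le c(P)$ yields $|\eta P(A)| \le m\,c(P)$. Taking the supremum over $A$ gives $\|\mu P - \lambda P\| \le c(P)\,m = c(P)\,\|\mu-\lambda\|$.

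For (iii) I would use $(P_1P_2)(\theta,\cdot) = P_1(\theta,\cdot)\,P_2$ to write, for any $\theta,\psi$,
\[\|(P_1P_2)(\theta,\cdot) - (P_1P_2)(\psi,\cdot)\| = \|P_1(\theta,\cdot)P_2 - P_1(\psi,\cdot)P_2\| \le c(P_2)\,\|P_1(\theta,\cdot) - P_1(\psi,\cdot)\|,\]
where the inequality is (ii) applied with $\mu = P_1(\theta,\cdot)$ and $\lambda = P_1(\psi,\cdot)$; since $\|P_1(\theta,\cdot)-P_1(\psi,\cdot)\| \le c(P_1)$, the right-hand side is at most $c(P_1)c(P_2)$, and taking the supremum over $(\theta,\psi)$ closes the argument. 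The main obstacle is the normalization step in (ii): one must exploit the total-mass-zero property of $\mu-\lambda$ to identify $\|\mu-\lambda\|$ with the common mass $m$ of the Jordan parts and to express the signed measure as $m$ times a difference of honest probability measures, which is precisely what allows the Dobrushin coefficient to enter as the bound on the kernel differences.
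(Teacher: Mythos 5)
Your proof is correct, and parts (i) and (iii) are essentially the paper's own argument: the same $[0,1]$ bound for (i), and the same reduction of (iii) to (ii) via $(P_1P_2)(\theta,\cdot)=P_1(\theta,\cdot)P_2$ followed by $\|P_1(\theta,\cdot)-P_1(\psi,\cdot)\|\leq c(P_1)$. Part (ii) is where you genuinely diverge. The paper argues by duality with test functions: it invokes the oscillation inequality $|\mu(g)-\nu(g)|\leq\frac12\sup_{\theta,\psi}|g(\theta)-g(\psi)|\,\|\mu-\nu\|$ (its \eqref{inequality1}), applies it to the function $Pg$, and bounds $\frac12\sup_{\theta,\psi}|Pg(\theta)-Pg(\psi)|$ by $c(P)$, all under the functional form $\sup_{|g|\leq 1}|\mu(g)|$ of the total variation norm. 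You instead stay entirely at the level of sets: the Hahn--Jordan decomposition of $\eta=\mu-\lambda$, the observation that $\eta(\Theta)=0$ forces equal Jordan masses $m=\|\eta\|$, the renormalisation $\eta=m(\tilde\mu-\tilde\lambda)$ into a difference of genuine probability measures, and the product representation $\eta P(A)=m\iint\bigl[P(\theta,A)-P(\psi,A)\bigr]\tilde\mu(d\theta)\tilde\lambda(d\psi)$, which is bounded by $m\,c(P)$ directly from the definition \eqref{dobrushin-def}. What your route buys is precision about constants: the paper's \eqref{eq:totalVariationNorm} identifies $\sup_{A}|\mu(A)|$ with $\sup_{|g|<1}|\mu(g)|$, but on signed measures of total mass zero --- exactly the case of $\mu-\lambda$ and of $P(\theta,\cdot)-P(\psi,\cdot)$ --- these two quantities differ by a factor of $2$, and the paper's chain of inequalities only closes if this set/function bookkeeping is carried out silently and consistently. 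Your argument never introduces test functions, so every step is unambiguous with respect to the set-sup definitions of both $\|\cdot\|$ and $c(P)$, at the modest price of invoking the Hahn--Jordan decomposition. What the paper's route buys is brevity (one display once \eqref{inequality1} is granted) and a formulation in the function-duality language that it reuses later, for instance in the proof of Theorem \ref{theoreme1}.
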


\noindent
A proof of the previous result is given below. The reader may also refer to~\cite{Iosifescu69, VanLieshout94, makur16,Winkler03} for more details.\\

\begin{proof}
\noindent
$(i)$. Clearly $c(P)\geq 0$ by definition~\eqref{dobrushin-def}. The $c(P)\leq 1$ is also immediate by using the property that $P$ is a transition kernel. For all $A\in \cT$ and all $\theta, \psi \in \Theta $, $P(\theta, A), \, P(\psi, A)\in [0,1] $ so $|P(\theta,A) -P(\psi,A)|\leq 1.$ Thus, by taking the supremum in $A$ and $\theta, \psi$ it results that $c(P)\leq 1$.\\

\noindent
$(ii)$. Let us recall first a classical result. If $\mu$ and $\nu$ are from $\Upsilon$ and $g$ is a smooth  function we have, by using ~\eqref{kernelP}:
\begin{equation}
\label{inequality1}
|\mu(g)-\nu(g)| \leq \ds\frac{1}{2}\sup _{\theta,\psi\in\Theta }
|g(\theta) -g(\psi)| \, \|\mu-\nu\|.
\end{equation}
Let us prove now the inequality. By the definition of the total variation norm \eqref{eq:totalVariationNorm}, the formula~\eqref{kernelP} and the definition \eqref{dobrushin-def} we get:
\begin{equation}
\label{ineq1}
\begin{array}{ll}
\|\mu P - \nu P\|& = \ds\sup_{g, |g|\leq 1} |(\mu P) g - (\nu P)g| \\
& \leq \ds\sup_{g,|g|\leq 1}\left\{\ds\frac{1}{2}\sup_{\theta,\psi\in \Theta} |Pg(\theta) - Pg(\psi)| \|\mu -\nu\| \right\}\\
&\leq c(P) \|\mu-\nu\|,
\end{array}
\end{equation}
by applying \eqref{inequality1}.\\

\noindent 
$(iii)$. It can be directly obtained, by using \eqref{dobrushin-def} and $(ii)$ and lastly \eqref{inequality1}
\begin{equation*}
     \begin{array}{ll}
     c(P_1P_2) & = \ds\sup_{\theta, \psi \in \Theta} \|P_1P_2 (\theta,.)-P_1P_2(\psi,.)\|\\
     &=\ds\sup_{\theta, \psi \in \Theta} \|P_1 (\theta,.)P_2-P_1(\psi,.)P_2\|\\
     &\leq c(P_1)c(P_2).
     \end{array} 
 \end{equation*}
\end{proof}

\noindent
An immediate consequence of the previous result is that
\begin{equation}
\label{dobrushin-iteration}
c(P_1 P_2 \cdots P_n) \leq \prod_{i=1}^{n}c(P_i).   
\end{equation}

\noindent
Assume that $(P_j)_{j=1}^{\infty}$ is a sequence of transition kernels on $(\Theta,\cT)$ and that $\mu_0$ is a given initial distribution. The sequence  $(P_j)_{j=1}^{\infty}$ and the distribution $\mu_0$ define a discrete time {\it non-homogeneous Markov process} with the state space $\Theta$. Here, we are interested in the successive distributions $\mu_j = \mu_{j-1}P_j$ of the random variables $\theta_j$. Throughout the paper for $m+1\leq k$, the following notation is used
\begin{equation*}
P^{(m,k)} = P_{m+1}P_{m+2} \cdots P_{k},
\end{equation*}
so that one has $\mu_k = \mu_m P^{(m,k)}$.\\

\noindent
In the following it is assumed that the function to optimise $h : \Theta \rightarrow \RR$ is $\cT-$measurable, positive and bounded. For the sake of simplicity it is assumed that the global minimum of $h$ is scaled to zero. The global maximum is obtained by considering the function $-h$. It is denoted by 
\begin{equation}
    \label{minim}
M_h = \{\theta \in \Theta | h(\theta)  = 0\}
\end{equation} 
the {\it minimum set} of $h$ and by $\triangle h = \sup_{\theta \in \Theta} h(\theta)$ the maximum variation of $h$.\\

\begin{defn}
Let $h$ be the function to be optimised in the state space $(\Theta,\cT,\upsilon)$, $(P_j)_{j \in \NN}$ a sequence of transition kernels, $T_1 \geq T_2 \geq \ldots \geq T_{i-1} \geq T_{i} \rightarrow_{i \rightarrow \infty} 0$. A {\it simulated annealing } process is the non-homogeneous Markov process $(P_i)_{i\in \mathbb{N}}$ on $(\Theta,\cT,\upsilon)$ defined by
\begin{equation}
\label{transition-kernel}
\begin{array}{ll}
P_j(\theta,A) & = \ds\int_{A}\alpha_{j}(\theta \rightarrow
\psi)q(\theta \rightarrow \psi)\1_{\{\psi \in A\}}d\psi\\
& \qquad +  \1_{\{\theta \in A\}}
\left[1 - \ds\int_{A}\alpha_{j}(\theta \rightarrow
\psi)q(\theta \rightarrow \psi) d\psi \right],
\end{array}
\end{equation}
with
\begin{equation*}
\alpha_{j}(\theta \rightarrow \psi) = \min\left\{ 1,\left [\frac{\exp(-h(\psi))}{\exp(-h(\theta))}\right ]^{1/T_j} \frac{q(\psi \rightarrow \theta)}{q(\theta \rightarrow \psi)}
\right \}.
\end{equation*}
\end{defn}

\noindent
Again there is a lot of freedom in choosing the proposal distribution.  Similarly to~\eqref{idealProposal}, we consider the distribution
\begin{eqnarray}
q(\theta \rightarrow \psi) = q_\delta (\theta \rightarrow \psi |
\xx) = \frac{[f(\xx |\psi)/\zeta(\psi)]^{1/T_j}}{I_{j}(\theta, \delta,
\xx)}\1_{b(\theta, \delta/2)}\{\psi\} 
\label{idealProposalSA}
\end{eqnarray}
for a fixed $\delta > 0$, a parameter value $\nu \in \Theta$ and a realisation $\xx$ of the model $p(\cdot|\nu)$. The quantity $I_{j}(\theta, \delta, \xx)$ is given by the integral 
\begin{equation*}
I_{j}(\theta, \delta, \xx) = \int_{b(\theta, \delta/2)} \left[f(\xx|\phi)/c(\phi)\right]^{1/T_j} \, d \phi.
\end{equation*}
The SA process build with proposals~\eqref{idealProposalSA} is called {\it Ideal Simulated Annealing} (ISA).\\

\noindent
The following result states the convergence of the SA process for the optimisation of our problem, that is whenever $\upsilon(M_{h}) = 0$~(see \cite{HaarSaks91,HaarSaks92}), where $M_h$ is the set defined in \eqref{minim}.\\

\begin{thm} 
\label{theoreme1}
Let $(P_j)_{j\in\mathbb{N}}$ be a SA process. Suppose that there exist sequences $(n_j)_{j\in \mathbb{N}}$ and $(r_j)_{j\in\mathbb{N}}$  and a number $0~<d~<~1$ such that $(n_j)_{j\in \mathbb{N}}$ is increasing and $\lim_{j\rightarrow\infty} (j-r_j)= \infty=\lim_{j\rightarrow \infty} r_j$. Assume also that the following conditions hold: 
\begin{enumerate}[label=(\roman*)]
\item  For each $ j\geq 1$
\begin{equation} 
\label{cond1} 
 c(P^{(n_j,n_{j+1})}) \leq 1 - (1-d) \exp\left[- \ds\sum_{l=n_j+1}^{n_{j+1}} \ds\frac{\Delta h}{T_l}\right];
\end{equation}
\item$ \ds\lim_{k\rightarrow \infty} \ds\sum_{j=r_k}^k \exp\left(-\ds\sum_{l=n_j+1}^{n_{j+1}}\ds\frac{\Delta h}{T_l}\right) = \infty$;
\item $\ds\lim_{k\rightarrow\infty} \ds\frac{\cL_h(1/T_{n_{r_k}})}{\cL_h{(1/T_{n_{k+2}})}} =1$, where
$
\cL_{h}(1/T) = \int_{\Theta} \exp(-h(\theta)/T)\upsilon(d\theta). 
$
\end{enumerate}
Then
\begin{equation}
\ds\lim_{j\rightarrow \infty } \| \mu_j-\pi_j\|= 0 .
\end{equation}
\end{thm}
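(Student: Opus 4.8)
The plan is to control the block-boundary errors $E_j := \|\mu_{n_j}-\pi_{n_j}\|$ by a single contraction recursion, and then to show that both the transient term and the accumulated equilibrium drift vanish. First I would derive the recursion by a telescoping identity. Since $\pi_l P_l=\pi_l$ for each $l$ and $P^{(l-1,n_{j+1})}=P_l\,P^{(l,n_{j+1})}$, the increments satisfy $\pi_l P^{(l,n_{j+1})}-\pi_{l-1}P^{(l-1,n_{j+1})}=(\pi_l-\pi_{l-1})P^{(l-1,n_{j+1})}$, and summing over $l=n_j+1,\dots,n_{j+1}$ telescopes to $\pi_{n_{j+1}}-\pi_{n_j}P^{(n_j,n_{j+1})}$. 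Combining this with $\mu_{n_{j+1}}=\mu_{n_j}P^{(n_j,n_{j+1})}$ gives
\[
\mu_{n_{j+1}} - \pi_{n_{j+1}} = (\mu_{n_j}-\pi_{n_j})P^{(n_j,n_{j+1})} - \sum_{l=n_j+1}^{n_{j+1}}(\pi_l-\pi_{l-1})P^{(l-1,n_{j+1})}.
\]
Taking the total variation norm, applying Lemma property (ii) (with $c(P)\le 1$ on the drift terms) and hypothesis \eqref{cond1}, I obtain
\[
E_{j+1} \le (1-\beta_j)E_j + D_j, \qquad \beta_j = (1-d)\exp\Big[-\sum_{l=n_j+1}^{n_{j+1}}\tfrac{\Delta h}{T_l}\Big], \quad D_j=\sum_{l=n_j+1}^{n_{j+1}}\|\pi_l-\pi_{l-1}\|.
\]

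Next I would unroll this recursion from block $r_k$ to block $k$. Bounding $E_{r_k}\le 1$ and $\prod_{i=j+1}^{k}(1-\beta_i)\le 1$ yields
\[
E_{k+1} \le \prod_{j=r_k}^{k}(1-\beta_j) + \sum_{j=r_k}^{k} D_j.
\]
The first term is handled by condition (ii): using $1-x\le e^{-x}$ it is at most $\exp\big[-(1-d)\sum_{j=r_k}^k \exp(-\sum_{l=n_j+1}^{n_{j+1}}\Delta h/T_l)\big]$, whose exponent tends to $-\infty$, so the product tends to $0$; here the assumptions $j-r_j\to\infty$ and $r_j\to\infty$ ensure the summation window grows and its index goes to infinity.

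The heart of the argument, and the step I expect to be the main obstacle, is showing $\sum_{j=r_k}^k D_j\to 0$ from condition (iii). For this I would bound the distance between consecutive equilibria. Writing $c_l=\cL_h(1/T_l)$ and noting $c_l\le c_{l-1}$ since $T_l\le T_{l-1}$ and $h\ge 0$, the $\upsilon$-densities $p_l=e^{-h/T_l}/c_l$ satisfy $\min(p_l,p_{l-1})\ge e^{-h/T_l}/c_{l-1}$ pointwise, whence $\int_\Theta\min(p_l,p_{l-1})\,d\upsilon\ge c_l/c_{l-1}$ and therefore
\[
\|\pi_l-\pi_{l-1}\| \le 1 - \frac{c_l}{c_{l-1}} \le -\log\frac{c_l}{c_{l-1}} = \log\frac{\cL_h(1/T_{l-1})}{\cL_h(1/T_l)}.
\]
Summing over a block and then over $j=r_k,\dots,k$, the right-hand side telescopes to $\log\big(\cL_h(1/T_{n_{r_k}})/\cL_h(1/T_{n_{k+1}})\big)$, which tends to $\log 1=0$ by condition (iii) (the ratio at $n_{k+1}$ being dominated by the one at $n_{k+2}$, using monotonicity of $\cL_h$). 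Hence $\sum_{j=r_k}^k D_j\to 0$, and the two estimates give $E_{k+1}=\|\mu_{n_{k+1}}-\pi_{n_{k+1}}\|\to 0$. Finally, convergence along \emph{all} indices follows from the one-step form $\|\mu_j-\pi_j\|\le \|\mu_{n_k}-\pi_{n_k}\| + \sum_{l=n_k+1}^{j}\|\pi_l-\pi_{l-1}\|$ for $n_k<j\le n_{k+1}$, since $E_k\to0$ and each $D_k\to0$ as a consequence of the vanishing of the block sums.
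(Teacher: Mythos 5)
Your proposal is correct, and its skeleton matches the paper's: both arguments rest on the same three ingredients, namely the telescoping identity $\pi_M P^{(M,N)}-\pi_N=\sum_{l=M}^{N-1}(\pi_l-\pi_{l+1})P^{(l,N)}$ (which the paper establishes by induction, you by direct telescoping using $\pi_l P_l=\pi_l$), Dobrushin contraction combined with conditions (i)--(ii) to kill the transient term, and condition (iii) to control the accumulated equilibrium drift. Your block recursion $E_{j+1}\le(1-\beta_j)E_j+D_j$, once unrolled with $E_{r_k}\le 1$, is essentially the estimate the paper obtains in \eqref{first-ineq}--\eqref{third-ineq} by its one-shot split at $n_{r_{k''}}$, so that part is a repackaging rather than a new route. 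The one genuine difference is at the heart of the drift step: the paper imports the bound $\sum_i\|\pi_i-\pi_{i-1}\|\le 2\log\left(\cL_h(1/T_k)/\cL_h(1/T_n)\right)$ from Haario and Saksman (quoted as \eqref{control-pi}) without proof, whereas you prove the needed estimate from scratch via an overlap argument: $c_l\le c_{l-1}$ because $T_l\le T_{l-1}$ and $h\ge 0$, then $\min(p_l,p_{l-1})\ge e^{-h/T_l}/c_{l-1}$ pointwise, hence $\|\pi_l-\pi_{l-1}\|\le 1-c_l/c_{l-1}\le\log(c_{l-1}/c_l)$, which telescopes across blocks. This is correct (under the paper's sup-over-sets convention for the total variation norm it even yields constant $1$ in place of the $2$ of \eqref{control-pi}), and it makes the proof self-contained where the paper's is not; your remark that the ratio at $n_{k+1}$ is squeezed by the one at $n_{k+2}$ through monotonicity of $\cL_h$ is a small but necessary point that you supply correctly. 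Your final passage from block boundaries to arbitrary indices, $\|\mu_j-\pi_j\|\le E_k+D_k$ for $n_k<j\le n_{k+1}$ with $D_k\le\sum_{j'=r_k}^k D_{j'}\to 0$, plays exactly the role of the paper's choice $n_{k''+1}<m\le n_{k''+2}$, so nothing is missing there either.
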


\noindent
The proof of this theorem is an adaptation of the proof given in \cite{HaarSaks91}.\\

\begin{proof}
\noindent
Suppose that $\varepsilon >0$ is given. The condition $(ii)$ implies that one can find integers $k$ and $k'$ such that for all $k>k'$ we have:
\begin{equation}
\label{eqthm11}
    \ds\sum_{j=r_k}^k \exp \left[-\Delta h \ds\sum_{l=n_j+1}^{n_{j+1}} \ds\frac{1}{T_l}\right]>\ds\frac{-\log(\varepsilon /4) }{1-d}.
\end{equation}
By using now the standard inequality $1-x< \exp(-x)$ for $x>0$ and the relation \eqref{eqthm11} it gets:
\begin{equation}
\begin{array}{l}
\label{eqthm12}
       \ds\prod_{j=r_k}^{k} \left[ 1-(1-d)\exp\left( -\Delta h \ds\sum_{l=n_j+1}^{n_{j+1}}\ds\frac{1}{T_l} \right)\right]\\ \qquad <\exp\left[-(1
       -d)\ds\sum_{j=r_k}^k \exp \left(-\Delta h\ds\sum_{l=n_j+1}^{n_{j+1}} \ds\frac{1}{T_l} \right)\right] \\ 
       \qquad <\ds\frac{\varepsilon }{4}.\\
       \end{array}
\end{equation}
The condition~$(i)$ is used to obtain
\begin{equation*}
c (P^{(n_j,n_{j+1})})\leq 1-(1-d)\exp\left(-\Delta h \ds\sum_{l=n_j+1}^{n_{j+1}} \ds\frac{1}{T_l}\right).
\end{equation*}
By doing now the product of these relations for $j\in \{ r_k, \ldots, k\}$ it results
\begin{equation}
\label{eqthm13}
\prod_{j=r_k}^k c(P^{(n_j,n_{j+1})}) \leq \prod_{j=r_k}^k\left[1-(1-d)\exp\left(-\Delta h \ds\sum_{l={n_j+1}}^{n_{j+1}} \ds\frac{1}{T_l}\right)\right].
\end{equation}
And now, by embedding \eqref{eqthm12} within \eqref{eqthm13} we obtain, for $k\ge k'$:
\begin{equation}
\label{eqthm14}
c(P_{n_{r_k}+1}\, P_{n_{r_k}+1} \,  \ldots \, P_{n_{k+1}}) < \ds\frac{\varepsilon}{4}. 
\end{equation}

\noindent
The previous integers $k$ and $k'$ can also be chosen such that 
\begin{equation}
\label{eq-Laplace}
\log\left({\cal{L}}_h\left(\frac{1}{T_{n_{r_k}}}\right)/{\cal{L}}_h\left(\ds\frac{1}{T_{n_{k+2}}}\right)\right) < \ds\frac{\varepsilon}{2},
\end{equation}
by using hypothesis $(iii)$.\\
Following~(\cite{HaarSaks91}, Thm. 5.1), for $0<k\leq i\leq n$ we have 
\begin{equation}
\label{control-pi}
\ds\sum_{i=1}^n\|\pi_i-\pi_{i-1}\|\leq 2 \log \left(\ds\frac{{\cal{L}}_h(1/T_k)}{{\cal{L}}_h(1/T_n)}\right).
\end{equation}
Thus by writing~\eqref{control-pi} with the adequate indexes and putting together the result in~\eqref{eq-Laplace}, it comes out that:
\begin{equation}
\label{control-pi-2}
\ds\sum_{l=n_{r_k}}^{n_{k+1}-1} \| \pi_l-\pi_{l-1}\| <\varepsilon.
\end{equation}

\noindent
Consider now two positive integers $M$ and $N$ such that $M\leq N-1$ and recall the notation, for $m\leq k-1$
\begin{equation*}
P^{(m,k)}=P_{m+1}P_{m+2}\ldots P_k.
\end{equation*}

\noindent
Let us prove by induction that
\begin{equation}
    \label{induction}
    \pi_MP^{(M,N)}-\pi_N=\ds\sum_{l=M}^{N-1}(\pi_l-\pi_{l+1})P^{(l,N)}.
\end{equation}

\noindent
For the verification step of this relation consider $M$ fixed and take $N=M+1$. This gives the following equality:
\begin{equation*}
    \pi_MP^{(M,M+1)}-\pi_{M+1}=(\pi_M-\pi_{M+1})P^{(M,M+1)},
\end{equation*}
which is verified since $\pi_{M+1}$ is a stationary distribution and thus $\pi_{M+1}=\pi_{M+1}P_{M+1}$.

\noindent
Assuming now, that the relation \eqref{induction} is verified for $M$ and $N$ fixed with $M\leq N-1$, let us
prove that~\eqref{induction} is also true for $N+1$. This leads to:
\begin{equation}
    \label{induction-step}
    \pi_MP^{(M,N+1)}-\pi_{N+1}=\ds\sum_{l=M}^N(\pi_l-\pi_{l+1})P^{(l,N+1)}
\end{equation}
which is equivalent to
\begin{equation*}
    (\pi_MP^{(M,N)}-\pi_{N+1})P_{N+1}=\ds\sum_{l=M}^{N-1}(\pi_l-\pi_{l+1})P^{(l,N)}P_{N+1}+\pi_NP_{N+1}-\pi_{N+1}P_{N+1},
\end{equation*}
that becomes
\begin{equation*}
    (\pi_MP^{(M,N)}-\pi_{N})P_{N+1}=\ds\sum_{l=M}^{N-1}(\pi_l-\pi_{l+1})P^{(l,N)}P_{N+1}
\end{equation*}
and this is true since~\eqref{induction-step} is satisfied. We admit now that \eqref{induction} is valid.

\noindent
Let us now complete the proof of the Theorem \ref{theoreme1}.

\noindent
Let $m>n_{k'+1}$ and take $k''$ such that $n_{k''+1}<m\leq n_{k''+2}$. 
As $(\pi_m)_{m\geq 1}$ is a stationary distribution, we have :
\begin{equation}
    \label{first-ineq}
    \begin{array}{ll}
    \|\mu_m-\pi_m\|&=\|\mu_{n_{r_{k''}}}P^{(n_{r_{k''}},m)}-\pi_m\|\\
    &\leq \|(\mu_{n_{r_{k''}}}-\pi_{n_{r_{k''}}})P^{(n_{r_{k''}},m)}\|+\|\pi_{n_{r_{k''}}}P^{(n_{r_{k''}},m)}-\pi_m\|.\\
    \end{array}
\end{equation}
In this last inequality the first term on the right hand side can be controlled by using \eqref{ineq1} and    \eqref{eqthm14}
\begin{equation}
    \label{second-ineq}
    \begin{array}{ll}
    \|(\mu_{n_{r_{k''}}}-\pi_{n_{r_{k''}}})P^{(n_{r_{k''}},m)}\|&\leq 2c(P^{(n_{r_{k''}},m)})\\
    &\leq 2c(P^{(n_{r_{k''}}, n_{k''+1})})\\
    &<\ds\frac{\varepsilon}{2}.
    \end{array}
\end{equation}
The second term can be expressed in the following form by using \eqref{induction}:
\begin{equation}
    \label{third-ineq}
    \begin{array}{ll}
    \|\pi_{n_{r_{k''}}}P^{(n_{r_{k''}},m)}-\pi_m\|&\leq \ds\sum_{l=n_{r_{k''}}}^{m-1}\|(\pi_{l+1}-\pi_l)P^{(l,m)}\|\\
    &^\leq \ds\sum_{l=n_{r_{k''}}}^{n_{k''+2}-1}\|\pi_l-\pi_{l+1}\|\\
    &<\ds\frac{\varepsilon}{2},
    \end{array}
\end{equation}
by making use of the inequality \eqref{control-pi-2}.
Combining now \eqref{second-ineq} and \eqref{third-ineq} in \eqref{first-ineq} gives finally for $m>n_{k''+1}$
\begin{equation*}
    \|\mu_m-\pi_m\|<\varepsilon.
\end{equation*}
As $\varepsilon>0$ is arbitrary the result of the theorem follows. This concludes the proof.
\end{proof}

\noindent
As a consequence of Theorem $\ref{theoreme1}$, it is obtained:\\

\begin{cor}
\label{corolaire1} 
Suppose that the conditions of Theorem \ref{theoreme1}
are satisfied and suppose also that we have the weak convergence $\ds\lim_{j\rightarrow+\infty} \pi_j= \pi$ where $\pi\in \Upsilon(\Theta) $.
Then we have also the weak convergence 
\begin{equation*}
\ds\lim_{j \rightarrow+\infty}\mu_j =\pi.
\end{equation*}
\end{cor}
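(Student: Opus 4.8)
The plan is to reduce the statement to a triangle inequality at the level of test functions, combining the total variation estimate already produced by Theorem~\ref{theoreme1} with the assumed weak convergence $\pi_j \to \pi$. Recall that, since $\Theta$ is compact, weak convergence of probability measures means $\int_\Theta g\,d\lambda_j \to \int_\Theta g\,d\lambda$ for every bounded continuous $g : \Theta \to \RR$. Fix such a $g$ and write, for each $j$,
\begin{equation*}
|\mu_j(g) - \pi(g)| \leq |\mu_j(g) - \pi_j(g)| + |\pi_j(g) - \pi(g)|.
\end{equation*}
The second term tends to $0$ as $j \to \infty$ directly by the hypothesis $\pi_j \to \pi$, so it suffices to control the first term.

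For the first term I would use the definition of the total variation norm \eqref{eq:totalVariationNorm}. Since $\mu_j$ and $\pi_j$ are probability measures, their difference is a finite signed measure, and normalising by $\sup_{\theta}|g(\theta)|$ yields
\begin{equation*}
|\mu_j(g) - \pi_j(g)| = |(\mu_j - \pi_j)(g)| \leq \sup_{\theta\in\Theta}|g(\theta)|\;\|\mu_j - \pi_j\|,
\end{equation*}
which is exactly the type of estimate already exploited in the proof of Theorem~\ref{theoreme1} (compare \eqref{inequality1} and \eqref{second-ineq}). As $g$ is bounded, $\sup_{\theta}|g(\theta)|$ is a fixed finite constant, and Theorem~\ref{theoreme1} guarantees $\|\mu_j - \pi_j\| \to 0$ under the standing conditions. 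Hence $|\mu_j(g) - \pi_j(g)| \to 0$.

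Combining the two bounds shows $|\mu_j(g) - \pi(g)| \to 0$ for every bounded continuous $g$, which is precisely the asserted weak convergence $\mu_j \to \pi$. The argument is deliberately soft: the whole content of Theorem~\ref{theoreme1} is black-boxed as the single fact $\|\mu_j - \pi_j\| \to 0$, and everything else is a triangle inequality. The only point deserving care—and the mildest possible ``obstacle''—is the passage from convergence in the total variation norm to convergence of integrals against test functions; this is delivered cleanly by \eqref{eq:totalVariationNorm} (up to the harmless normalisation factor built into the paper's definition of $\|\cdot\|$), and I would note explicitly that boundedness of $g$, rather than continuity, is all that the first term requires, continuity being invoked only to match the class of functions defining weak convergence of $\pi_j$ to $\pi$.
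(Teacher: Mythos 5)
Your proof is correct and is essentially the argument the paper intends: the paper states Corollary~\ref{corolaire1} without proof as an immediate consequence of Theorem~\ref{theoreme1}, and the implicit reasoning is exactly your triangle inequality, splitting $|\mu_j(g)-\pi(g)|$ into a total-variation term controlled by $\|\mu_j-\pi_j\|\to 0$ and a weak-convergence term controlled by the hypothesis $\pi_j\to\pi$. Your remark about the normalisation factor in the paper's definition of $\|\cdot\|$ is a harmless refinement that does not change the argument.
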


\noindent
The following important result is also stated.\\

\begin{cor}
\label{corGlobalOptimum}
Suppose that the hypothesis of the Theorem \ref{theoreme1} are satisfied 
for the simulated annealing process $(P_i)_{i\geq 1}$. Suppose also that $h$ achieve its maximum in $\theta_{opt}\in \Theta $. Then we have the weak convergence 
$$\ds\lim_{j\rightarrow+\infty}\mu_j=\delta_{\theta_{opt}}.$$
\end{cor}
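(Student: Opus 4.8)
The plan is to obtain the statement as a special case of Corollary~\ref{corolaire1}, the only real task being to identify the weak limit of the annealing distributions. Under the scaling of~\eqref{minim}, the measures $\pi_n(d\theta)=\cL_h(1/T_n)^{-1}\exp(-h(\theta)/T_n)\,\upsilon(d\theta)$ place their mass where $h$ is smallest, so they concentrate on the minimum set $M_h$; since $h=-\log p(\cdot\,|\,\yy)$ by~\eqref{functionH}, this minimum set is exactly the set of maximisers of the posterior, and the hypothesis that the optimum is attained at the single point $\theta_{opt}$ reads $M_h=\{\theta_{opt}\}$ (in particular $\upsilon(M_h)=0$, consistently with Theorem~\ref{theoreme1}). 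As the hypotheses of Theorem~\ref{theoreme1} are assumed, Corollary~\ref{corolaire1} becomes available the moment one proves
\begin{equation*}
\pi_j \underset{j\rightarrow\infty}{\longrightarrow} \delta_{\theta_{opt}} \quad\text{weakly},
\end{equation*}
after which $\mu_j\to\delta_{\theta_{opt}}$ follows at once.

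The remaining point is a Laplace-type concentration estimate, which I would carry out as follows. To prove weak convergence to the Dirac mass it is enough to show $\pi_n(\Theta\setminus G)\to 0$ for every open neighbourhood $G$ of $\theta_{opt}$. The set $\Theta\setminus G$ is closed in the compact $\Theta$, hence compact, so the continuous function $h$ attains on it a minimum $m_G:=\min_{\theta\in\Theta\setminus G}h(\theta)$; since $\theta_{opt}\in G$ is the unique zero of $h$, one has $m_G>0$. Putting $\varepsilon=m_G/2$ and using the continuity of $h$ at $\theta_{opt}$, I would pick a ball $B=b(\theta_{opt},\rho)\subset G$ on which $h<\varepsilon$. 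Bounding the numerator by $\exp(-m_G/T_n)$ (as $\upsilon$ is a probability measure) and the denominator from below by $\int_B\exp(-h/T_n)\,d\upsilon\ge\upsilon(B)\exp(-\varepsilon/T_n)$ yields
\begin{equation*}
\pi_n(\Theta\setminus G)\le\frac{1}{\upsilon(B)}\exp\!\left(-\frac{m_G-\varepsilon}{T_n}\right)=\frac{1}{\upsilon(B)}\exp\!\left(-\frac{m_G}{2T_n}\right),
\end{equation*}
which tends to $0$ since $T_n\downarrow 0$. Hence $\pi_n(G)\to 1$ for every such $G$, i.e.\ precisely $\pi_n\to\delta_{\theta_{opt}}$ weakly.

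The two exponential bounds are routine; the delicate ingredients, and the genuine content of the argument, are the inequalities $m_G>0$ and $\upsilon(B)>0$. The first rests on the continuity of $h$ together with the \emph{uniqueness} of the minimiser, $M_h=\{\theta_{opt}\}$: without uniqueness one would only obtain concentration on $M_h$ rather than a single Dirac mass. The second requires that the reference measure $\upsilon$ charge every neighbourhood of $\theta_{opt}$, so that the denominator cannot decay faster than the numerator; this holds because $\upsilon$ is equivalent to Lebesgue measure on the full-dimensional compact $\Theta$. Securing these two positivity facts is the main obstacle; once they are in hand, the corollary is immediate by Corollary~\ref{corolaire1}.
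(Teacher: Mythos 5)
Your proposal is correct and takes essentially the same route as the paper: the paper treats this corollary as an immediate consequence of Corollary~\ref{corolaire1} combined with the weak convergence $\pi_j \rightarrow \delta_{\theta_{opt}}$ of~\eqref{pin-1}, which it cites from~\cite{HaarSaks91,HaarSaks92} rather than proving. Your Laplace-type concentration estimate simply supplies a self-contained proof of that cited ingredient, with the same implicit requirements (uniqueness of the minimiser of $h$, continuity of $h$, and $\upsilon$ charging every neighbourhood of $\theta_{opt}$) correctly identified.
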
 


\begin{rem}
The first two conditions in Theorem~\ref{theoreme1} ensure the weak ergodicity of the SA process. The condition (i) is fulfilled for transition kernels built with rather general proposal densities 
$q(\theta,\psi)\upsilon(d\psi)$
with $q : \Theta \times \Theta \rightarrow [0,\infty [$ measurable~(\cite{HaarSaks91}, Lemma 4.1, Thm. 4.2). The second condition (ii) if fulfilled if a logarithmic cooling schedule is used for the temperature
\begin{equation*}
T_{j} = \frac{K}{\log(j+2)} \quad \text{with} \quad K > 0,
\end{equation*}
which is similar to the cooling schedules obtained for maximising Markov random fields or marked point process probability densities~\cite{GemaGema84,StoiEtAl05}. The third condition (iii) is a bound for the sum of distances between two equilibrium distributions that correspond to two different temperatures~(\cite{HaarSaks91}, Thm. 3.2). It is the key condition, that whenever $\upsilon(M_{h}) = 0$, it allows to reduce the weak convergence of the process to the weak convergence of the equilibrium distributions~(\cite{HaarSaks92}, pp.44). 
\end{rem}

\noindent
The previous results show that the ISA process given by~\eqref{idealProposalSA} converges weakly towards the global optimum of the $h$ function~\eqref{functionH}. Still, these results cannot be transformed directly into an optimisation algorithm to be used in practice, due to the need of computation of the normalising constants $\zeta(\theta)$. The next section shows how to overcome this drawback by building an alternative chain able to follow the path of the theoretical ISA process as close as desired.\\

\section{Shadow Simulated Annealing process}
\noindent
The authors in~\cite{StoiEtAl17} proposed an algorithm, {\it ABC Shadow} able to sample from posterior distributions~\eqref{posteriorGibbs}. This section presents this sampling method, builds a SA process based on it and derives convergence results. This new process is called {\it Shadow Simulated Annealing} (SSA) process. 

\subsection{ABC Shadow sampling algorithm}
\noindent
The idea of the {\it ABC Shadow} algorithm~\cite{StoiEtAl17} is to construct a {\it shadow Markov chain} able to follow the {\it ideal Markov chain} given by~\eqref{transition-kernel}. The steps of the algorithm are given below.\\

\begin{algo}
\label{abcShadow}
{\bf ABC Shadow~:} Fix $\delta$ and $m$. Assume the observed pattern is $\yy$ and the current state is $\theta_0$.

\begin{enumerate}
\item Generate $\xx$ according to $p(\xx|\theta_0)$.
\item For $k=1$ to $m$ do
\begin{itemize}  
\item Generate a new candidate $\psi$ following the density $U_\delta(\theta_{k-1} \to \psi)$ defined by
\begin{equation*}
U_\delta (\theta \to \psi) = \frac{1}{V_\delta} \1_{b(\theta, \delta/2)}\{\psi\},
\label{uniformProposal}
\end{equation*}
with  $V_\delta$ the volume of the ball $b(\theta, \delta/2)$.

\item The new state $\theta_{k} = \psi$ is accepted with probability $\alpha_{s}(\theta_{k-1} \rightarrow \psi)$ given by
\begin{eqnarray}
\lefteqn{\alpha_{s}(\theta_{k-1} \rightarrow \theta_{k}) = }\nonumber \\
& = & \min\left\{1,\frac{p(\theta_{k}|\yy)}{p(\theta_{k-1}|\yy)}\times\frac{f(\xx | \theta_{k-1})\zeta(\theta_{k})\1_{b(\theta_{k},\delta/2)}\{\theta_{k-1}\}}
{f (\xx | \theta_{k})\zeta(\theta_{k-1})\1_{b(\theta_{k-1},\delta/2)}\{\theta_{k}\}}
\right\} \nonumber \\
& = & \min\left\{1,\frac{f(\yy|\theta_{k})p(\theta_{k})}{f(\yy|\theta_{k-1})p(\theta_{k-1})}\times\frac{f(\xx | \theta_{k-1})}
{f (\xx | \theta_{k})}
\right\}
\label{acceptance_probability_shadow}
\end{eqnarray}
otherwise $\theta_{k} = \theta_{k-1}$.
\end{itemize}
\item Return $\theta_m$.
\item If another sample is needed, go to step $1$  with $\theta_0 = \theta_n$.
\end{enumerate}
\end{algo}

\noindent
The transition kernel of the Markov chain simulated by the previous algorithm is given by, for every $A \in  \cT$
\begin{eqnarray*}
P_s(\theta,A) & = &\int_{A}\alpha_{s}(\theta \rightarrow
\psi)U_{\delta}(\theta \rightarrow \psi)\1_{\{\psi \in A\}}d\psi\\
& +  &\1_{\{\theta \in A\}}
\left[1 - \int_{A}\alpha_{s}(\theta \rightarrow
\psi)U_{\delta}(\theta \rightarrow \psi) d\psi \right].
\label{kernelShadow}
\end{eqnarray*}

\noindent
The authors in~\cite{StoiEtAl17} show also that since
\begin{equation*}
|P_i(\theta,A) - P_s(\theta,A)| \leq  K_1 \delta
\end{equation*}
with $K_1$ a constant depending on $\xx,p$ and $\Theta$, 
there exists $\delta_0 = \delta_{0}(\varepsilon,m) > 0$ such that for every $\delta \leq \delta_{0}$, we have
\begin{equation*}
|P_{i}^{m}(\theta, A) - P_{s}^{m}(\theta,A)| < \varepsilon
\end{equation*}
uniformly in $\theta \in \Theta$ and $A \in \cT$. If $p(x|\theta) \in \cC^{1}(\Theta)$, then a description of $\delta_{0}(\varepsilon, n)$ can be provided.\\

\noindent
The previous results state that for any $\varepsilon$ and a given $\xx$ we may find a $\delta$ such that the shadow and ideal chain may evolve as close as desired during a pre-fixed value of $n$ steps. Under these assumptions, if $m \rightarrow \infty$ the Algorithm~\ref{abcShadow} does not follow closely the ideal chain started in $\theta_0$, anymore. The ergodicity properties of the ideal chain and the triangle inequalities allow to give a bound for the distance the distance after $n$ steps, between the shadow transition kernel and the equilibrium regime
\begin{equation*}
\| P_s^{(m)}(\theta,\cdot) - \pi(\cdot) \| \leq M(\xx,\delta)\rho^m + \varepsilon.
\end{equation*}
with $\rho \in (0,1)$.\\

\noindent
Iterating the algorithm more steps it is possible by re-freshing the auxiliary variable. This mechanism allows to re-start the algorithm for $m$ steps more, and by this, to obtain new samples of the approximate distribution of the posterior.\\

\subsection{An SA process based on the shadow chain}
\subsubsection{Process construction and properties}
\begin{thm}
\label{thmSAShadow}
Let $(P_{i,j})_{j \geq 1}$ be an ISA process associated with the ideal transition kernel $P_i$~\eqref{kernel} that samples from $\pi \propto \exp(-h)$ with $h$ given by~\eqref{functionH} using the proposals~\eqref{idealProposal}. The cooling schedule $k_{T}$ is chosen with respect to the Theorem~\ref{theoreme1}. According to the Algorithm~\ref{abcShadow}, to each $(P_{i,j})$ a shadow chain $P_{s,j})$~\eqref{kernelShadow} is attached. Then, there exists a sequence $\{\delta_{j} = \delta_{j} (\varepsilon,m), j \geq 1\}$ such that
\begin{equation*}
|P_{i,j}^{m}(\theta, A) - P_{s,j}^{m}(\theta,A)| < \varepsilon
\end{equation*}
for all $j \geq 1$, uniformly in $\theta \in \Theta$ and $A \in \cT$. If $p(x|\theta) \in \cC^{1}(\Theta)$, then a description of $\delta_{j}(\varepsilon, n)$ can be provided.
\end{thm}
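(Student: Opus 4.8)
The plan is to reduce the non-homogeneous statement to a family of single-chain comparisons, one for each index $j$, and then to invoke a telescoping identity to pass from a one-step bound to the $m$-step bound. For a \emph{fixed} $j$ the pair $(P_{i,j},P_{s,j})$ is an ideal/shadow pair in exactly the sense of~\cite{StoiEtAl17}, the only difference being that the ideal proposal~\eqref{idealProposalSA}, the ideal acceptance and the shadow acceptance~\eqref{acceptance_probability_shadow} are all evaluated at the inverse temperature $1/T_j$. So the first step is to re-run the single-chain argument of~\cite{StoiEtAl17} at temperature $T_j$ to obtain a one-step estimate
\[
\sup_{\theta \in \Theta}\| P_{i,j}(\theta,\cdot) - P_{s,j}(\theta,\cdot)\| \leq K_1(j)\,\delta ,
\]
with $K_1(j)=K_1(T_j,\xx,p,\Theta)$ finite for every $j$.

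For the one-step bound itself, fix $j$ and observe that $P_{i,j}$ and $P_{s,j}$ share the same structure: an off-diagonal density supported on $b(\theta,\delta/2)$ (a proposal times an acceptance probability) plus a holding mass. It therefore suffices to compare the off-diagonal densities $\psi\mapsto q_\delta(\theta\rightarrow\psi)\,\alpha_{i,j}(\theta\rightarrow\psi)$ and $\psi\mapsto U_\delta(\theta\rightarrow\psi)\,\alpha_{s}(\theta\rightarrow\psi)$; the holding masses then agree automatically, being one minus the integral of the respective off-diagonal part. The ideal weighting $[f(\xx|\cdot)/\zeta(\cdot)]^{1/T_j}$ in~\eqref{idealProposalSA} and the uniform shadow proposal $U_\delta$ are reconciled through the compensating acceptance ratios, which is the crux of~\cite{StoiEtAl17}. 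Under the hypothesis $p(\xx|\theta)\in\cC^{1}(\Theta)$ the map $\phi\mapsto[f(\xx|\phi)/\zeta(\phi)]^{1/T_j}$ is $C^1$, so over a ball of radius $\delta/2$ it is constant up to an error of order $\delta$ times its gradient; since $\Theta$ is compact this gradient is bounded, while differentiation of the $1/T_j$ power produces a factor $1/T_j$. Expanding $I_j(\theta,\delta,\xx)$, the volume $V_\delta$ and the two acceptance probabilities to first order in $\delta$ then shows that the two off-diagonal densities agree up to $O(\delta)$ uniformly in $\theta$, giving the displayed bound with an explicit $K_1(j)$ governed by the $\cC^{1}$ norm of $f/\zeta$ and by $1/T_j$.

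To propagate this to $m$ steps I would use the operator telescoping identity
\[
P_{i,j}^{m} - P_{s,j}^{m} = \sum_{\ell=0}^{m-1} P_{i,j}^{\ell}\,(P_{i,j}-P_{s,j})\,P_{s,j}^{m-1-\ell}.
\]
Evaluated at $\theta$, the measure $P_{i,j}^{\ell}(\theta,\cdot)$ is a probability measure, so $P_{i,j}^{\ell}(\theta,\cdot)(P_{i,j}-P_{s,j})$ is a difference of two probability measures whose total variation is at most $\sup_{\phi}\|(P_{i,j}-P_{s,j})(\phi,\cdot)\|$; a further application of the contraction property $\|\lambda P\|\leq\|\lambda\|$ (Lemma~1$(ii)$ together with $c(P)\leq 1$) shows that each of the $m$ summands is bounded in total variation by $K_1(j)\,\delta$. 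Summing over $\ell$ yields
\[
\sup_{\theta\in\Theta}\ \sup_{A\in\cT}\ |P_{i,j}^{m}(\theta,A)-P_{s,j}^{m}(\theta,A)| \leq m\,K_1(j)\,\delta ,
\]
so it suffices to set $\delta_j=\delta_j(\varepsilon,m):=\varepsilon/\bigl(2\,m\,K_1(j)\bigr)$. This produces the announced sequence and, through the explicit form of $K_1(j)$, the promised description of $\delta_j$ under the $\cC^{1}$ assumption.

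The main obstacle is the temperature dependence of $K_1(j)$. Because every quantity is raised to the power $1/T_j$ and $T_j\downarrow 0$, the effective Lipschitz constants—entering through the gradient of $[f/\zeta]^{1/T_j}$, through the normalising integral $I_j(\theta,\delta,\xx)$, and through the $j$-dependent acceptance probabilities—grow without bound as $j\to\infty$, so one cannot hope for a single $\delta$ valid for all $j$. One must instead check that $K_1(j)$ is finite for each fixed $j$ (which follows from $T_j>0$, the $\cC^{1}$ regularity, the strict positivity of $p(\yy|\theta)$ and the compactness of $\Theta$) and then allow $\delta_j\to 0$ accordingly. This is precisely why the statement asks only for a sequence $\{\delta_j\}$ rather than a uniform constant, and it is the step I expect to require the most care.
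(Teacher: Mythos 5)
Your proof is correct and follows essentially the same route as the paper: the paper's own proof is a one-line appeal to Proposition~1 of~\cite{StoiEtAl17} applied at each fixed temperature $T_j$, which is exactly the one-step bound $\|P_{i,j}(\theta,\cdot)-P_{s,j}(\theta,\cdot)\|\leq K_1(j)\,\delta$ plus its propagation to $m$ steps that you reconstruct explicitly. Your added observation that $K_1(j)$ blows up as $T_j\downarrow 0$, forcing a $j$-dependent choice $\delta_j$ rather than a uniform one, is precisely the point implicit in the theorem's statement of a sequence $\{\delta_j(\varepsilon,m)\}$, so no gap remains.
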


\begin{proof}
The proof is obtained by considering for each step~$j$ of the algorithm, the Proposition~1 in~\cite{StoiEtAl17}.
\end{proof}

\noindent
The previous process given by the sequence the shadow chains $P_s$ is named the Shadow Stochastic Annealing (SSA) process. Controlling the $\delta$ parameter in the same time with the temperature allows to obtain the following convergence properties.\\

\begin{thm}
Let us consider the assumptions of Theorem~\ref{thmSAShadow} fulfilled and let $\varepsilon >0$ be a fixed value. Then, for the SSA process $(P_{s,j})_{j \geq 1}$, the following results hold~:
\begin{enumerate}[label=(\roman*)]
\item For each $j, T_j, \delta_j$ we have $\|P_{s,j}^{m}-\pi_j\| \leq \varepsilon $ and also for $j$ big enough 
$\| P_{s,j}^{m} -\delta_{\theta_{opt}}\| \leq \varepsilon $.
\item Consider now for $j\geq 1$, $\varepsilon_j=\frac{\varepsilon}{j}$. For each $j$ we can construct $T_j, \delta_j$ such that  $\|P_{s,j}^{m}-\pi_j\|\leq \varepsilon_j$ and $\ds\lim_{j\rightarrow +\infty} \| P_{s,j}^{m}-\delta_{\theta_{opt}} \|=0$ 
\end{enumerate}
\end{thm}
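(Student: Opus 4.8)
The plan is to reduce everything to two ingredients that are already available: the shadow-to-ideal kernel estimate of Theorem~\ref{thmSAShadow}, and the convergence of the ideal annealing process supplied by Theorem~\ref{theoreme1} together with Corollary~\ref{corGlobalOptimum}. The argument is then a sequence of triangle inequalities in the total variation norm, with the two control parameters $\delta_j$ and $T_j$ tuned so that each contribution falls below the prescribed tolerance.

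For part $(i)$ I would start from the decomposition
\begin{equation*}
\|P_{s,j}^{m}(\theta,\cdot)-\pi_j\| \leq \|P_{s,j}^{m}(\theta,\cdot)-P_{i,j}^{m}(\theta,\cdot)\| + \|P_{i,j}^{m}(\theta,\cdot)-\pi_j\|.
\end{equation*}
The first term is bounded by $\varepsilon/2$ by invoking Theorem~\ref{thmSAShadow} with tolerance $\varepsilon/2$, which produces the admissible $\delta_j$. The second term I would control through the uniform ergodicity of the ideal chain at the fixed temperature $T_j$ recalled in Section~2: one has $\|P_{i,j}^{m}(\theta,\cdot)-\pi_j\|\leq M_j\rho_j^{m}$, so for $m$ large enough (equivalently, a cooling schedule slow enough that the mixing margins at level $j$ are preserved) this term is also below $\varepsilon/2$. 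Taking the supremum over $\theta$ and $A$ then yields $\|P_{s,j}^{m}-\pi_j\|\leq\varepsilon$.

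For the statement about $\delta_{\theta_{opt}}$, I would use that under $\upsilon(M_h)=0$ and the assumption that $h$ attains its maximum at $\theta_{opt}$, Corollary~\ref{corGlobalOptimum} (via Corollary~\ref{corolaire1}) gives the weak convergence $\pi_j\to\delta_{\theta_{opt}}$. Since total variation closeness implies weak closeness, the bound just obtained shows that $P_{s,j}^{m}$ is weakly as close to $\pi_j$ as we like, while for $j$ large $\pi_j$ is weakly within $\varepsilon$ of $\delta_{\theta_{opt}}$; a triangle inequality in a metric for weak convergence then gives the claim. I would emphasise here that the symbol $\|\cdot\|$ appearing in $\|P_{s,j}^{m}-\delta_{\theta_{opt}}\|$ must be read in the weak sense, since the total variation distance between an absolutely continuous law and a Dirac mass is always equal to one.

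Part $(ii)$ is the same construction run with the shrinking tolerance $\varepsilon_j=\varepsilon/j$ in place of the fixed $\varepsilon$: for each $j$ I would choose $\delta_j$ from Theorem~\ref{thmSAShadow} at level $\varepsilon_j/2$ and fix $T_j$ (or the number of inner steps $m$) so that the ergodic term is below $\varepsilon_j/2$, giving $\|P_{s,j}^{m}-\pi_j\|\leq\varepsilon_j$. Letting $j\to\infty$, the facts $\varepsilon_j\to0$ and $\pi_j\to\delta_{\theta_{opt}}$ weakly combine to give $\lim_{j\to\infty}\|P_{s,j}^{m}-\delta_{\theta_{opt}}\|=0$ in the weak topology. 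The hard part will be the simultaneous control of the two parameters: as $T_j\downarrow0$ the ideal chain mixes ever more slowly, so $\rho_j\to1$ and the ergodic term $\|P_{i,j}^{m}-\pi_j\|$ tends to degrade; keeping it below $\varepsilon_j/2$ while $\varepsilon_j\to0$ is what forces the coordination between the annealing speed, the number of inner steps $m$, and the shadow accuracy $\delta_j$, and this is where the real content of the theorem lies.
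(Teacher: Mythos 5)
Your proposal is correct and follows the same skeleton as the paper's proof, but the paper's own argument is only two sentences long, so the comparison is worth making explicit. The paper simply invokes Theorem~\ref{thmSAShadow} (existence of the sequence $\delta_j(\varepsilon,m)$ making shadow and ideal kernels uniformly $\varepsilon$-close) and then says that ``combining this with Corollary~\ref{corGlobalOptimum} allows to conclude''; it never writes out the triangle inequalities. What you do is fill in exactly that gap: your splitting $\|P_{s,j}^{m}-\pi_j\| \leq \|P_{s,j}^{m}-P_{i,j}^{m}\| + \|P_{i,j}^{m}-\pi_j\|$, with the second term handled by fixed-temperature uniform ergodicity, is precisely the bound $\| P_s^{m}(\theta,\cdot) - \pi(\cdot) \| \leq M(\xx,\delta)\rho^m + \varepsilon$ that the paper records at the end of Section~4.1 but does not reuse explicitly in the proof; and your passage to $\delta_{\theta_{opt}}$ via $\pi_j \to \delta_{\theta_{opt}}$ is the content behind the citation of Corollary~\ref{corGlobalOptimum}. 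Two of your side remarks are genuinely valuable and absent from the paper: first, the observation that $\|P_{s,j}^{m}-\delta_{\theta_{opt}}\|$ can only be read in a metric for weak convergence, since the total variation distance between an absolutely continuous law and a Dirac mass equals one (the paper states the norm as total variation but its own Corollaries~\ref{corolaire1} and~\ref{corGlobalOptimum} are weak-convergence statements, so your reading is the only consistent one); second, the tension in part (ii) between the fixed number of inner steps $m$ and the degradation of the mixing rate $\rho_j \to 1$ as $T_j \downarrow 0$, which forces either $m$ to grow with $j$ or the cooling to be slowed, and which the paper's proof does not address at all. So your attempt is not merely adequate; it is more careful than the published proof, and the ``hard part'' you identify is a real gap in the paper's argument rather than in yours.
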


\begin{proof}
Theorem~\ref{thmSAShadow} proves that there exists a sequence $\{\delta_{j} = \delta_{j} (\varepsilon,m), j \geq 1\}$ such that
\begin{equation*}
|P_{i,j}^{m}(\theta, A) - P_{s,j}^{m}(\theta,A)| < \varepsilon
\end{equation*}
for all $j \geq 1$, uniformly in $\theta \in \Theta$ and $A \in \cT$.\\ 

\noindent
Combining this with the result of the Corollary~\ref{corGlobalOptimum} allows to conclude.\\

\noindent
We end the demonstration by the following observation : if $p(x|\theta) \in \cC^{1}(\Theta)$, then a description of $\delta_{j}(\varepsilon, m)$ can be provided.
\end{proof}

\begin{rem}
The first part of the preceding result can be interpreted as follows. Let $\{\theta_j, j \geq 1 \}$ be a realisation of the SSA process. Then
there exists a sequence $\{\delta_{j} = \delta_{j} (\varepsilon,m), j \geq 1\}$ corresponding to each $\theta_j$ such that for $j \geq j_{\max} (\varepsilon,m)$
\begin{equation*}
\{\theta_j, j \geq j_{\max}\} \subset b(\theta_{opt},\varepsilon)
\end{equation*}
where $\theta_{opt}$ is the global optimum of the function $h$.
\end{rem}
\begin{cor}
We have 
\begin{equation}
\delta_j= \ds\frac{K(x_j,T_j, n_j)}{j}
\end{equation}
and $K(x_j, T_j, n_j)$
\end{cor}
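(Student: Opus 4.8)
The plan is to make explicit the sequence $\{\delta_j\}$ whose existence is granted by Theorem~\ref{thmSAShadow} and by part $(ii)$ of the preceding theorem, by tracking how the one-step comparison constant of~\cite{StoiEtAl17} depends on the stage parameters $x_j$, $T_j$ and $n_j$. First I would recall the pointwise estimate $|P_{i,j}(\theta,A) - P_{s,j}(\theta,A)| \leq K_1 \delta_j$ holding at each stage $j$, and make the temperature dependence of $K_1$ visible. At temperature $T_j$ the kernels are built from the tempered density $[f(x_j|\psi)/\zeta(\psi)]^{1/T_j}$ via the proposal~\eqref{idealProposal} and the acceptance ratio~\eqref{acceptance_probability_shadow}; since $p(x|\theta) \in \cC^{1}(\Theta)$, the constant $K_1$ is governed by the gradient of $\log f(x_j|\cdot)$, and because the exponent $1/T_j$ multiplies this log-density, the effective Lipschitz constant scales like $1/T_j$. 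Hence one may write $K_1 = K_1(x_j,T_j) = \tilde{K}(x_j)/T_j$, with $\tilde{K}(x_j)$ depending only on $x_j$ and on the compact $\Theta$.

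Next I would propagate this one-step bound to the iterated (block) kernels through the telescoping identity
\begin{equation*}
P_{i,j}^{m} - P_{s,j}^{m} = \sum_{l=0}^{m-1} P_{i,j}^{l}\,(P_{i,j} - P_{s,j})\,P_{s,j}^{m-1-l}.
\end{equation*}
Applying the contraction property $(ii)$ of the Dobrushin coefficient and the fact that each transition kernel has operator norm at most one on signed measures yields an estimate of the form $|P_{i,j}^{m}(\theta,A) - P_{s,j}^{m}(\theta,A)| \leq g(n_j)\,K_1\,\delta_j$, where $g(n_j)$ collects the accumulated number of steps in the block indexed by $n_j$ (the crude choice $g(n_j)=n_j$ suffices, a sharper one exploiting the geometric decay governed by $c(P_{i,j})$ being available). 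Imposing the target accuracy $\varepsilon_j = \varepsilon/j$ of part $(ii)$ then forces $g(n_j)\,\tilde{K}(x_j)\,\delta_j / T_j \leq \varepsilon/j$, and choosing equality gives
\begin{equation*}
\delta_j = \frac{K(x_j,T_j,n_j)}{j}, \qquad K(x_j,T_j,n_j) = \frac{\varepsilon\,T_j}{g(n_j)\,\tilde{K}(x_j)},
\end{equation*}
which is exactly the claimed form.

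The step I expect to be the main obstacle is controlling the explicit temperature dependence of the comparison constant and reconciling the induced decay of $\delta_j$ with the weak-ergodicity conditions of Theorem~\ref{theoreme1}. Because $K_1 \sim 1/T_j$ diverges as $T_j \to 0$, the numerator satisfies $K(x_j,T_j,n_j) \sim T_j \to 0$, so $\delta_j$ tends to zero even faster than $1/j$; I must verify that this shrinking does not destroy condition~\eqref{cond1}, namely the lower bound on $c(P^{(n_j,n_{j+1})})$ underlying weak ergodicity. Since the proposal~\eqref{idealProposal} confines each move to the ball $b(\theta,\delta_j/2)$, a vanishing $\delta_j$ slows the mixing and the exponential factor $\exp[-\sum_{l}\Delta h/T_l]$ appearing in~\eqref{cond1} and in condition $(ii)$ must be re-examined to keep the relevant series divergent. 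Checking that the logarithmic schedule $T_j = K/\log(j+2)$ and the rate $\delta_j = K(x_j,T_j,n_j)/j$ jointly preserve conditions $(i)$--$(iii)$, thereby linking the explicit rate back to the weak convergence of Corollary~\ref{corGlobalOptimum}, is the delicate point of the argument.
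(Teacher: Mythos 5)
There is nothing in the paper to compare your attempt against: the corollary is stated with no proof at all, and the statement itself is truncated in the source --- the clause ``and $K(x_j,T_j,n_j)$'' breaks off before ever saying what $K$ is. Judged on its own terms, your reconstruction is the natural one and it delivers exactly the asserted form, for the right reason. The nontrivial content of the corollary is that $j\,\delta_j$ depends only on the stage ingredients $(x_j,T_j,n_j)$, and that is precisely what your argument produces: you combine the one-step comparison $|P_{i,j}(\theta,A)-P_{s,j}(\theta,A)|\leq K_1\delta_j$ that the paper quotes from \cite{StoiEtAl17}, your telescoping identity (which is correct; each of the $m$ summands is bounded by $K_1\delta_j$ because transition kernels contract zero-mass signed measures), and the stage-wise accuracy $\varepsilon_j=\varepsilon/j$ imposed in part (ii) of the preceding theorem, which is the sole source of the $1/j$. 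Solving the resulting bound, linear in $\delta_j$, is almost certainly what the authors intended, since the proof of Theorem~\ref{thmSAShadow} points to exactly this Proposition~1 of \cite{StoiEtAl17}.

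Two points remain open in your argument, and you should be aware that the paper does not settle them either. First, the scaling $K_1(x_j,T_j)=\tilde K(x_j)/T_j$ is asserted by a gradient heuristic rather than proved: the comparison constant of \cite{StoiEtAl17} is established for the untempered density, and making the $1/T_j$ dependence rigorous requires redoing that $\cC^{1}$ estimate for the tempered density $[f(x_j|\cdot)/\zeta(\cdot)]^{1/T_j}$ appearing in \eqref{idealProposalSA}, uniformly over the compact $\Theta$; the paper only ever says such a description ``can be provided''. Second, you correctly identify, but do not resolve, the genuinely delicate issue: since $T_j\to 0$, your $\delta_j$ vanishes faster than $1/j$, and one must check that proposals confined to balls $b(\theta,\delta_j/2)$ of vanishing radius still satisfy the weak-ergodicity condition \eqref{cond1} of Theorem~\ref{theoreme1} under the logarithmic cooling schedule. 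Without that verification the explicit rate is not linked back to the convergence statement of Corollary~\ref{corGlobalOptimum}, so your proof, like the paper's corollary, is incomplete on this point --- though yours has the merit of saying so explicitly.
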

\begin{rem}
The second part of the preceding result states that to a decreasing sequence of balls around the problem solution, it is possible to associate a sequence of $\delta$ parameters in order to get as close as desired to the global optimum of $h$.
\end{rem}

\subsubsection{A new algorithm for global optimisation}
\noindent 
The previous results justify the construction of the following algorithm.

\begin{algo}
\label{abcSAShadow}
{\bf Shadow Simulated Annealing (SSA) algorithm~:} fix $\delta=\delta_0$, $T=T_0$, $n$ and $k_{\delta}, k_{T} : \RR^{+} \rightarrow \RR^{+}$ two positive functions. Assume the observed pattern is $\yy$ and the current state is $\theta_0$.

\begin{enumerate}
\item Generate $\xx$ according to $p(\xx|\theta_0)$.
\item For $k=1$ to $m$ do
\begin{itemize}
\item Generate a new candidate $\psi$ following the density $U_\delta(\theta_{k-1} \to \psi)$ defined by
\begin{equation*}
U_\delta (\theta \to \psi) = \frac{1}{V_\delta} \1_{b(\theta, \delta/2)}\{\psi\},
\label{uniformProposal-1}
\end{equation*}
with  $V_\delta$ the volume of the ball $b(\theta, \delta/2)$.
\item The new state $\theta_{k} = \psi$ is accepted with probability $\alpha_{s}(\theta_{k-1} \rightarrow \psi)$ given by
\begin{eqnarray}
\lefteqn{\alpha_{s}(\theta_{k-1} \rightarrow \theta_{k}) = }\nonumber \\
& = & \min\left\{1,
\left[
\frac{p(\theta_{k}|\yy)}{p(\theta_{k-1}|\yy)}
\times
\frac{f(\xx | \theta_{k-1})}{f(\xx | \theta_{k}}
\right]^{1/T}
\times
\frac{\1_{b(\theta_{k},\delta/2)}\{\theta_{k-1}\}}
{\1_{b(\theta_{k-1},\delta/2)}\{\theta_{k}\}}
\right\} \nonumber \\
& = & \min\left\{1,
\left[
\frac{f(\yy|\theta_{k})p(\theta_{k})}{f(\yy|\theta_{k-1})p(\theta_{k-1})}\times\frac{f(\xx | \theta_{k-1})}
{f (\xx | \theta_{k})}
\right]^{1/T}
\right\}
\label{acceptance_probability_shadow_sa}
\end{eqnarray}
otherwise $\theta_{k} = \theta_{k-1}$.
\end{itemize}
\item Return $\theta_m$.
\item Stop the algorithm or go to step $1$  with $\theta_0 = \theta_n$, $\delta_0 = k_{\delta}(\delta)$ and $T_{0} = k_{T}(T)$.
\end{enumerate}
\end{algo}

\noindent
It is easy to see that the SSA algorithm is identical to Algorithm~\ref{abcShadow} whenever $\delta$ and $T$ remain unchanged. The SSA algorithm does not  have access at the states issued from the ideal chain. When the algorithm is started, the distance between the ideal and the shadow chain depends on the initial conditions. Still, independently of these conditions, as the control parameters $\delta_j$ and $T$ evolve, this distance evolves also, by approaching zero.


\section{Applications}
\noindent
This section illustrates the application of the SSA algorithm. The next part of this section applies the present method for estimating the model parameters of three point processes: Strauss, area-interaction and Candy model. All these models are widely applied in domains such environmental sciences, image analysis and cosmology~\cite{Lies00,MollWaag04,LiesStoi03,StoiDescZeru04,StoiEtAl07,StoiEtAl15,TempEtAl14,TempEtAl18}. Here the parameter estimation is done on simulated data and it is double aimed. The first purpose is to test the method on complicated models, that does not exhibit a closed analytic form for their normalising constants. The second one is to give to the potential user, some hints regarding the tuning of the algorithm. The last part of this section is dedicated to real data application: model fitting for the galaxy distribution in our Universe.\\

\subsection{Simulated data: point patterns and segment networks}
\noindent
The SSA Shadow algorithm is applied here to the statistical analysis of patterns which are simulated from a Strauss model~\cite{KellRipl76,Stra75}. This model describes random patterns made of points exhibiting repulsion. Its probability density with respect to the standard unit rate Poisson point process is
\begin{align}
p(\yy|\theta) & \propto \beta^{n(\yy)}\gamma^{s_{r}(\yy)}  \nonumber \\
              & =  \exp \left[ n(\yy) \log \beta  + s_r(\yy)\log \gamma \right].
\label{straussModel}
\end{align}
Here $\yy$ is a point pattern in the finite window $W$, while $t(\yy) = (n(\yy),s_{r}(\yy))$ and $\theta = (\log \beta, \log \gamma)$ are the sufficient statistic and the model parameter vectors, respectively. The sufficient statistics components $n(\yy)$ and $s_{r}(\yy)$ represent respectively, the number of points in $W$ and the number of pairs of points at a distance closer than $r$.\\

\noindent
The Strauss model on the unit square $W=[0,1]^2$ and with density parameters $\beta=100$, $\gamma=0.5$ and $r=0.1$, was considered. This gives for the parameter vector of the exponential model $\theta=(4.60,-0.69)$. The CFTP algorithm~(see Chapter 11 in~\cite{MollWaag04}) was used to get $1000$ samples from the model and to compute the empirical means of the sufficient statistics $\bar{t}(\yy)=(\bar{n}(\yy),\bar{s_{r}}(\yy))=(45.30,17.99)$. The SA based on the ABC Shadow algorithm was run using $\bar{t}(\yy)$ as observed data, while considering the $r$ parameter known.\\

\noindent
The prior density $p(\theta)$ was the uniform distribution on the interval $[0,7] \times [-7,0]$. Each time, the auxiliary variable was sampled using $100$ steps of a MH dynamics~\cite{Lies00,MollWaag04}. The $\Delta$ and $m$ parameters were set to $(0.01,0.01)$ and $200$, respectively. The algorithm was run for $10^6$ iterations. The initial temperature was set to $T_0=10^4$. For the cooling schedule a slow polynomial scheme was chosen 
\begin{equation*}
T_n = k_{T} \cdot T_{n-1}
\end{equation*}
with $k_T = 0.9999$. A similar scheme was chosen for the $\Delta$ parameters, with $k_{\Delta} = 0.99999$. Samples were kept every $10^3$ steps. This gave a total of $1000$ samples.\\

\noindent
The choice of the uniform prior was motivated by the fact that the posterior distribution using an uniform prior equals the likelihood distribution restricted to the domain of availability of the uniform distribution. Hence, following~\cite{BaddEtAl16,Lies00,MollWaag04} and the argument in~\cite{StoiEtAl17}, since the ML estimate approaches almost surely the true model parameters whenever the number of samples increases, the expected results of the SSA algorithm should be close to the model parameters used for its simulation.\\

\noindent
Figure~\ref{resultsStrauss} shows the time series of the SSA algorithm outputs applied to estimate the parameters of the previous Strauss process. The final values obtained for $\log\beta$ and $\log\gamma$ were $4.63$ and $-0.71$, respectively. The Table~\ref{tableStrauss} presents the empirical quartiles computed from the outputs of the algorithm. All these values are close to the true model parameters.\\

\begin{figure}[!htbp]
\begin{center}
\begin{tabular}{cc}
\includegraphics[width=6cm,height=4.5cm]{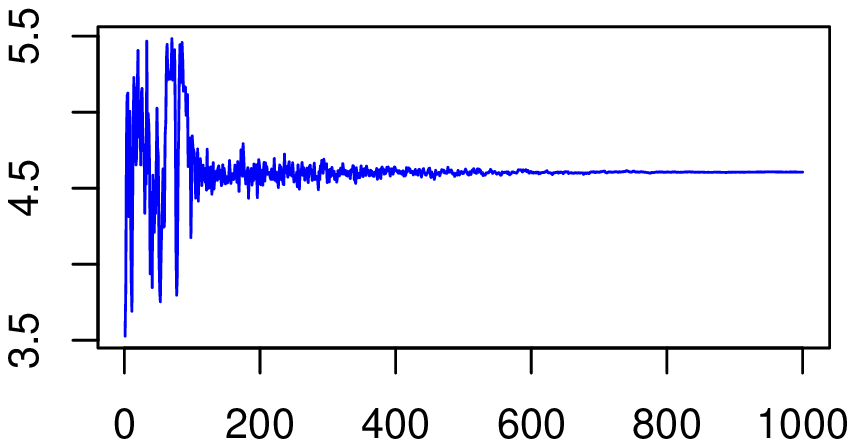} &
\includegraphics[width=6cm,height=4.5cm]{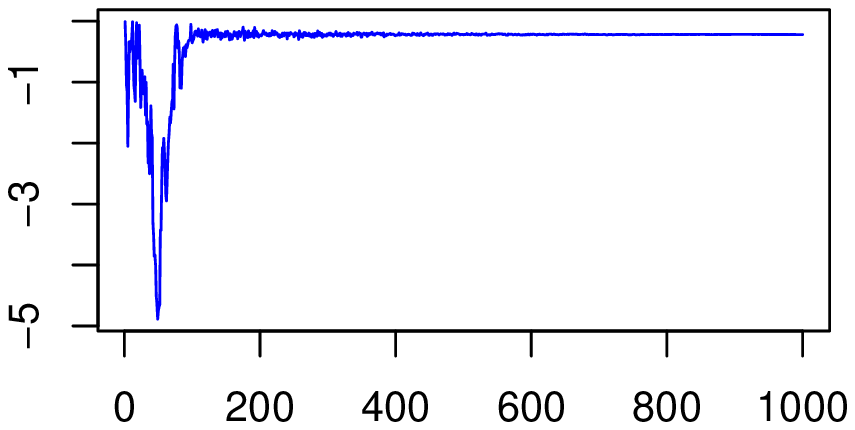}
\end{tabular}
\end{center}
\caption{SSA outputs for the MAP estimates computation of the Strauss model parameters. The true parameters of the model were $\theta=(4.60,-0.69)$, while the estimates are $\widehat{\theta}=(4.63,-0.71)$. }
\label{resultsStrauss}
\end{figure}

\begin{table}[!htbp]
\begin{center}
\begin{tabular}{|c|c|c|c|} 
\hline
\multicolumn{4}{|c|}{Summary statistics SSA Strauss estimation}\\
\hline
Parameters & $Q_{25}$ & $Q_{50}$ & $Q_{75}$ \\
\hline
SSA $\log\beta$  & 4.598 & 4.606 & 4.611\\
SSA $\log\gamma$ & -0.728 & -0.716 & -0.708\\
\hline
\end{tabular}
\end{center}
\caption{Empirical quartiles for the SSA Strauss model estimation.}
\label{tableStrauss}
\end{table}

\noindent
The area-interaction process introduced by~\cite{BaddLies95} is able to describe point patterns exhibiting clustering or repulsion. Its probability density with respect to the standard unit rate Poisson point process is
\begin{equation*}
p(\yy|\theta) \propto \beta^{n(\yy)}\gamma^{a_{r}(\yy)} = \exp \left[ n(\yy) \log \beta  + a_r(\yy)\log \gamma \right].
\label{areaInteractionModel}
\end{equation*}
with 
\begin{equation*}
a_{r}(\yy) = - \frac{\nu\left[\cup_{i=1}^{n}b(y_i,r)\right]}{\pi r^2}.
\end{equation*}

\noindent
The model vectors of the sufficient statistics and parameters are $t(\yy) = (n(\yy),a_{r}(\yy))$ and $\theta = (\log \beta, \log \gamma)$, respectively. If $\log \gamma < 0$ the point pattern surface induced by the radii around the points tends to occupy the whole domain $W$. This leads to a regular or repulsive distribution of points. If $\log \gamma > 0$ the point pattern surface induced by the radii around the points tends to be reduced. This leads to an aggregate or clustered distribution of points. Hence, parameter estimation of this model is also a morphological indicator, while sampling its posterior allows to assess statistical significance of this tendency~\cite{StoiEtAl17}.\\

\noindent
The following experiment was carried out, by considering the area-interaction model on the unit square $W=[0,1]^2$ and with density parameters $\beta=200$, $\gamma=e$ and $r=0.1$. This gives for the parameters vector of the exponential model $\theta=(5.29,1)$. A MH algorithm~(see Chapter 7 in~\cite{MollWaag04}) was used to get $1000$ samples from the model and to compute the empirical means of the sufficient statistics
$\bar{t}(\yy) = (\bar{n}(\yy),\bar{a_{r}}(\yy))=(144.31,-78.88)$. As previously, the SSA algorithm was run using $\bar{t}(\yy)$ as observed data, while considering the $r$ parameter known.\\

\noindent
The prior density $p(\theta)$ was the uniform distribution on the interval $[0,7] \times [-5,5]$. Each time, the auxiliary variable was sampled using $250$ steps of a MH dynamics~\cite{Lies00,MollWaag04}. The $\Delta$ and $m$ parameters were set to $(0.01,0.01)$ and $100$, respectively. The cooling schedule for the temperature, the descending scheme for $\delta$, the number of iterations and the number of samples were chosen as in the previous experiment.\\

\noindent
Figure~\ref{resultsAreaInt} and the Table~\ref{tableAreaInt} present the obtained results of the SSA algorithm applied to estimate the parameters of the previous area-interaction process. The final values of the algorithm's time series outputs were for $\log\beta$ and $\log\gamma$, $5.30$ and $1.03$, respectively. The empirical quartiles computed from the outputs of the algorithm indicate that the final results are close to the true model parameters.\\

\begin{figure}[!htbp]
\begin{center}
\begin{tabular}{cc}
\includegraphics[width=6cm,height=4.5cm]{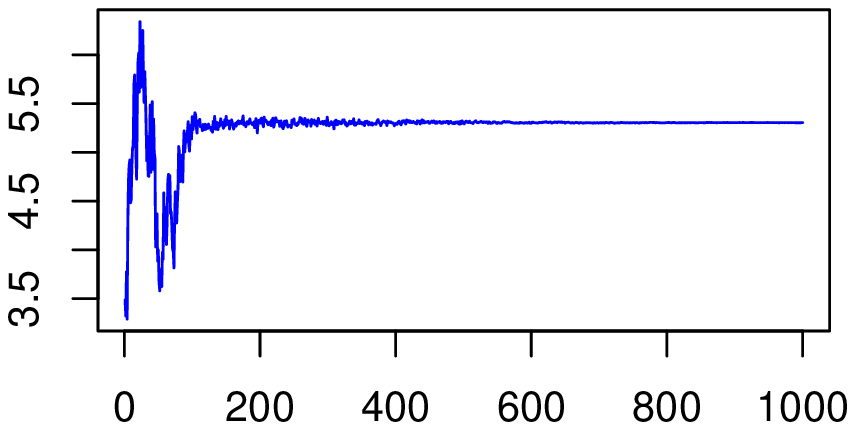} &
\includegraphics[width=6cm,height=4.5cm]{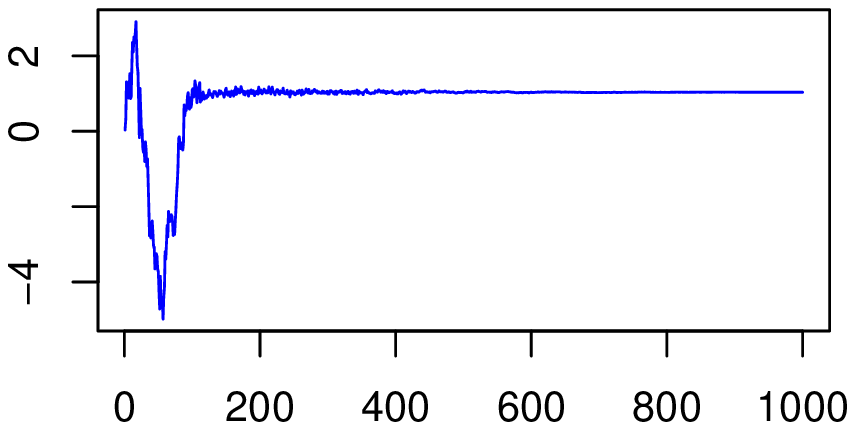}\\
\end{tabular}
\end{center}
\caption{SSA outputs for the MAP estimates computation of the area-interaction model parameters. The true parameters of the model were $\theta=(5.29,-1.00)$, while the estimates are $\widehat{\theta}=(5.30,-1.03)$.}
\label{resultsAreaInt}
\end{figure}

\begin{table}[!htbp]
\begin{center}
\begin{tabular}{|c|c|c|c|} 
\hline
\multicolumn{4}{|c|}{Summary statistics for SSA Area Interaction estimation}\\
\hline
Parameters & $Q_{25}$ & $Q_{50}$ & $Q_{75}$ \\
\hline
SSA $\log\beta$  & 5.298 & 5.304 & 5.308\\
SSA $\log\gamma$ & 1.026 & 1.035 & 1.042\\
\hline
\end{tabular}
\end{center}
\caption{Empirical quartiles for the SSA Area Interaction model estimation.}
\label{tableAreaInt}
\end{table}

\noindent
The Candy model is an object point process that simulates networks made of connected segments~\cite{LiesStoi03}. The model was successfully applied in image analysis and cosmology~\cite{StoiDescZeru04,StoiEtAl15}.

\noindent
A segment $y=(w,\xi,l)$ is given by its centre $w$, its orientation $\xi$ and its length $l$. The orientation is a uniform random variable on $M=[0,\pi)$, while the length is a fixed value. The probability density of the considered Candy model, with respect to the standard unit rate Poisson point process, is
\begin{equation}
p(\yy|\theta) \propto \exp \langle \theta_{d} n_{d}(\yy) + \theta_{s} n_s(\yy) + \theta_{f} n_f(\yy) + \theta_{r} n_{r}(\yy) \rangle
\label{candyModel}
\end{equation}
with $\yy$ a segments configuration, $\theta = (\theta_d,\theta_s,\theta_f,\theta_r)$ and 
\begin{equation*}
t(\yy) = (n_{d}(\yy), n_{s}(\yy), n_{f}(\yy), n_{r}(\yy))
\end{equation*}
the parameter and the sufficient statistic vectors, respectively. Each parameter controls its associate statistic. Here, $n_d$ is the number of segments connected at both of its extremities or doubly connected, $n_s$ is the number of segments connected at only one of its extremities or singly connected, $n_f$ is the number of segments that are not connected or free and $n_r$ the number of pairs of segments that are too close and not orthogonal.\\

\noindent
The connectivity of two segments is defined by the relative position of their extremities and their relative orientation. Two segments with only one  pair of extremities situated within the connection distance $r_c$ and with absolute orientation difference lower than a curvature parameter $\tau_c$ are connected. Similar to the orientation difference, the orthogonality of two segments is controlled the parameter $\tau_r$. For full details regarding the Candy model description and properties we recommend~\cite{LiesStoi03}.\\

\noindent
Figure~\ref{sampleCandy} pictures a realisation of the Candy model on $W=[0,3] \times [0,1]$. The segment length is $l=0.12$, the connection distance is $r_c = 0.01$, and the curvature parameters are $\tau_c=\tau_r=0.5$ radians. The model parameters are $\theta_d = 10$, $\theta_s=6$, $\theta_f=2$ and $\theta_r = -1$. It can be noticed that with these parameters the model outcomes tend to form patterns of a rather connected segments. An adapted MH 
algorithm~\cite{LiesStoi03}
was used to obtain $10000$ samples of the previous model and to compute the vector of the empirical means of the sufficient statistics $\bar{t}(\yy) = (\bar{n}_{d}(\yy) = 55.67, \bar{n}_{s}(\yy) = 50.26, \bar{n}_{f}(\yy) = 10.90, \bar{n}_{r}(\yy) = 40.24)$. These statistics were used as the data entry for a ABC SA algorithm.\\

\begin{figure}[!htbp]
\begin{center}
\includegraphics[width=8cm,height=5cm]{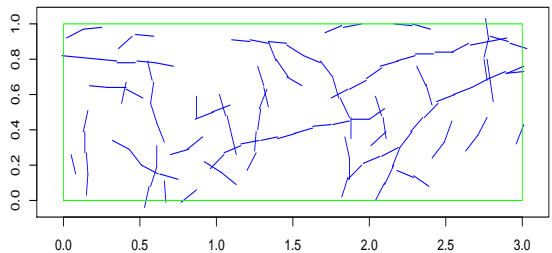} \end{center}
\caption{Realisation of the Candy model.}
\label{sampleCandy}
\end{figure}

\noindent
The prior density $p(\theta)$ was the uniform distribution on the interval $[0,12]^3 \times [-12,0]$. Each time, the auxiliary variable was sampled using $200$ steps of the adapted MH dynamics~\cite{LiesStoi06}. The $\Delta$ and $m$ parameters were set to $(0.01,0.01,0.01,0.01)$ and $500$, respectively. The other algorithm's parameters were chosen as in the previous examples.\\

\noindent
The algorithm results of the SSA are shown in 
Figure~\ref{resultsCandy} and  Table~\ref{tableCandy}. The final values of the algorithm's time series outputs were for $\theta_d$, $\theta_s$, $\theta_f$ and $\theta_r$, $10.009$,$6.002$,$1.982$ and $-0.996$ respectively. Together with the empirical quartiles given by the outputs of the algorithm, all these indicate that the algorithm outputs and the true model parameters are again rather close.\\

\begin{figure}[!htbp]
\begin{center}
\begin{tabular}{cc}
\includegraphics[width=6cm,height=4.5cm]{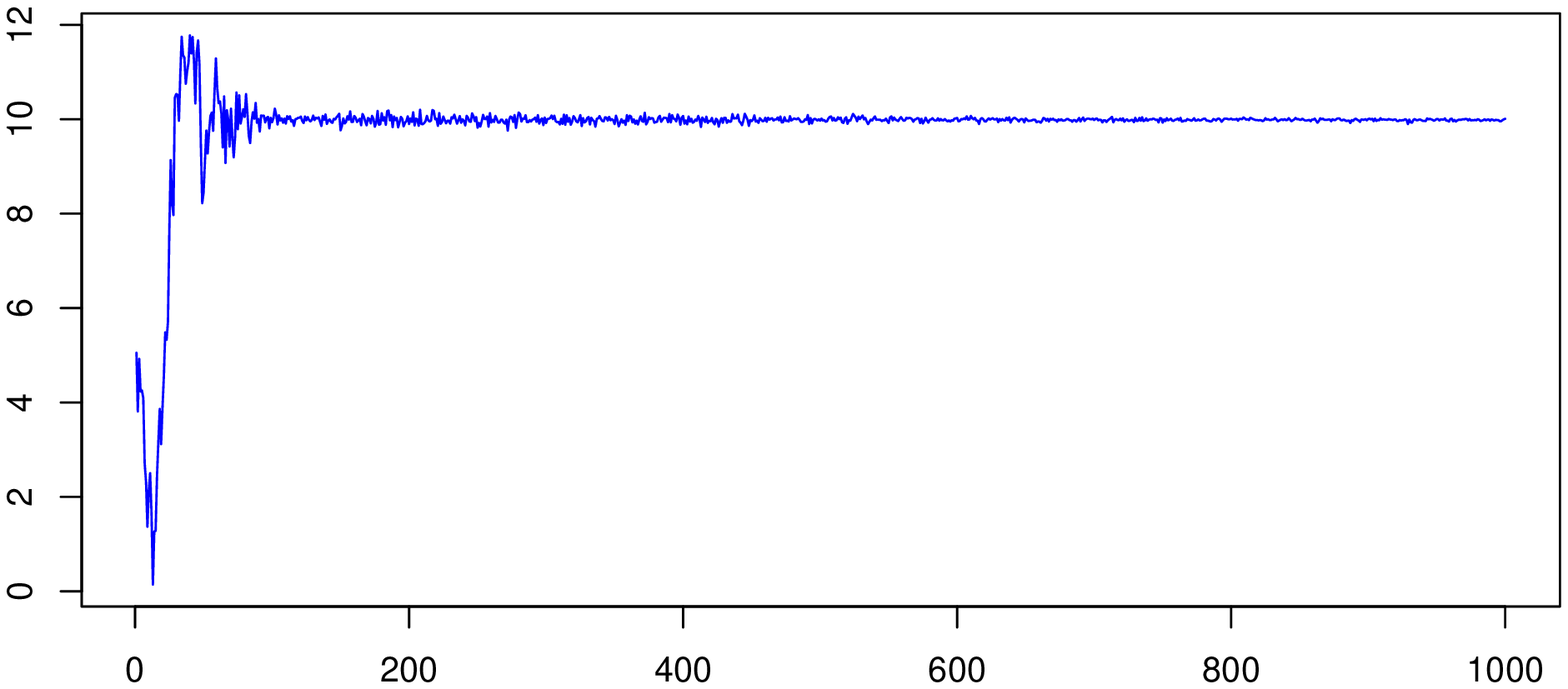} &
\includegraphics[width=6cm,height=4.5cm]{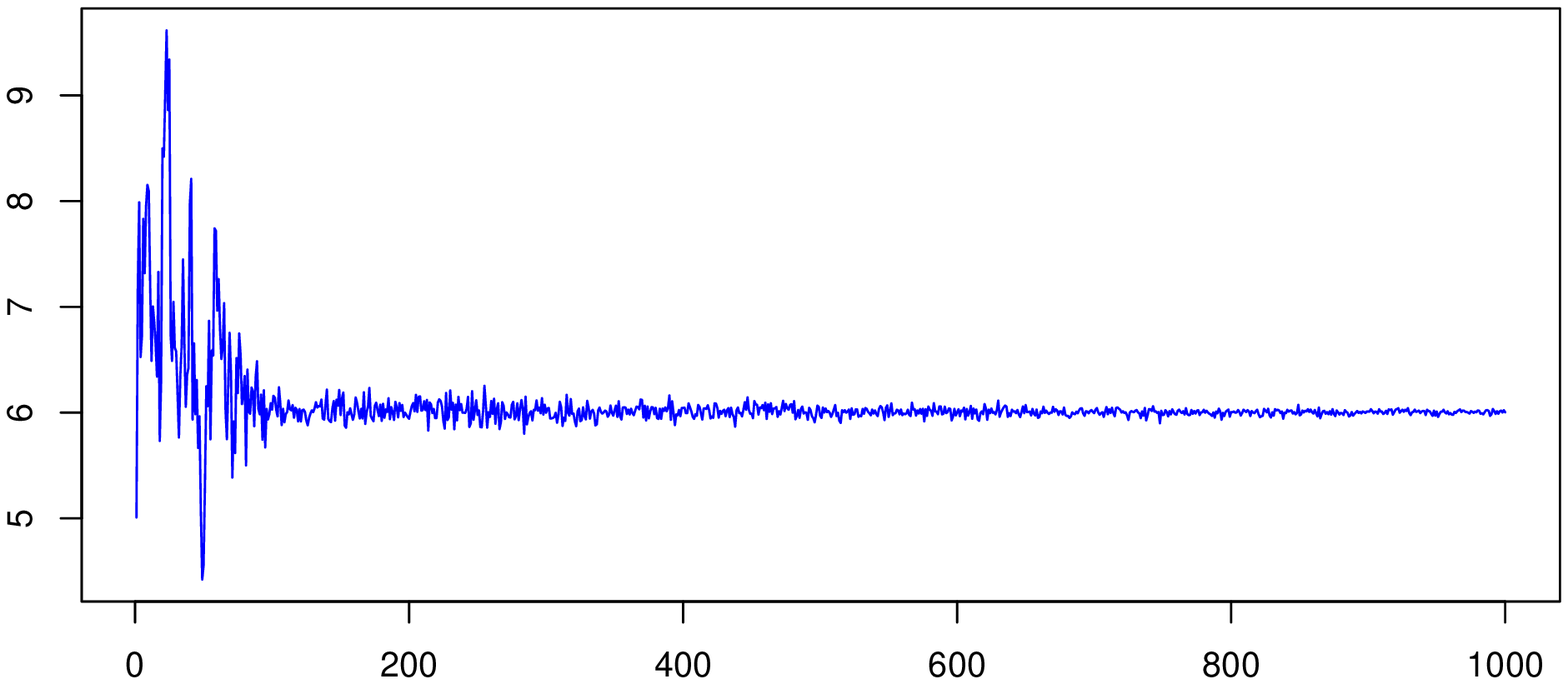}\\
\includegraphics[width=6cm,height=4.5cm]{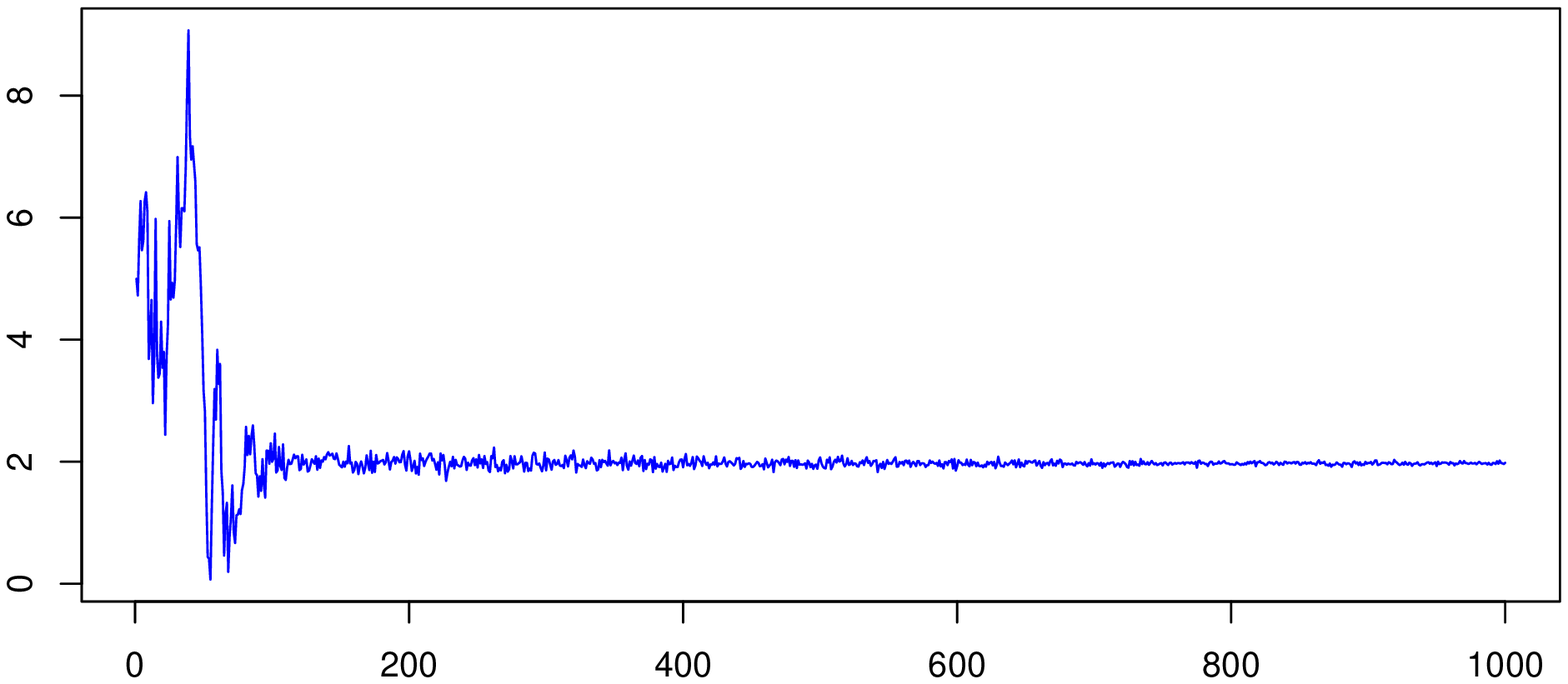} &
\includegraphics[width=6cm,height=4.5cm]{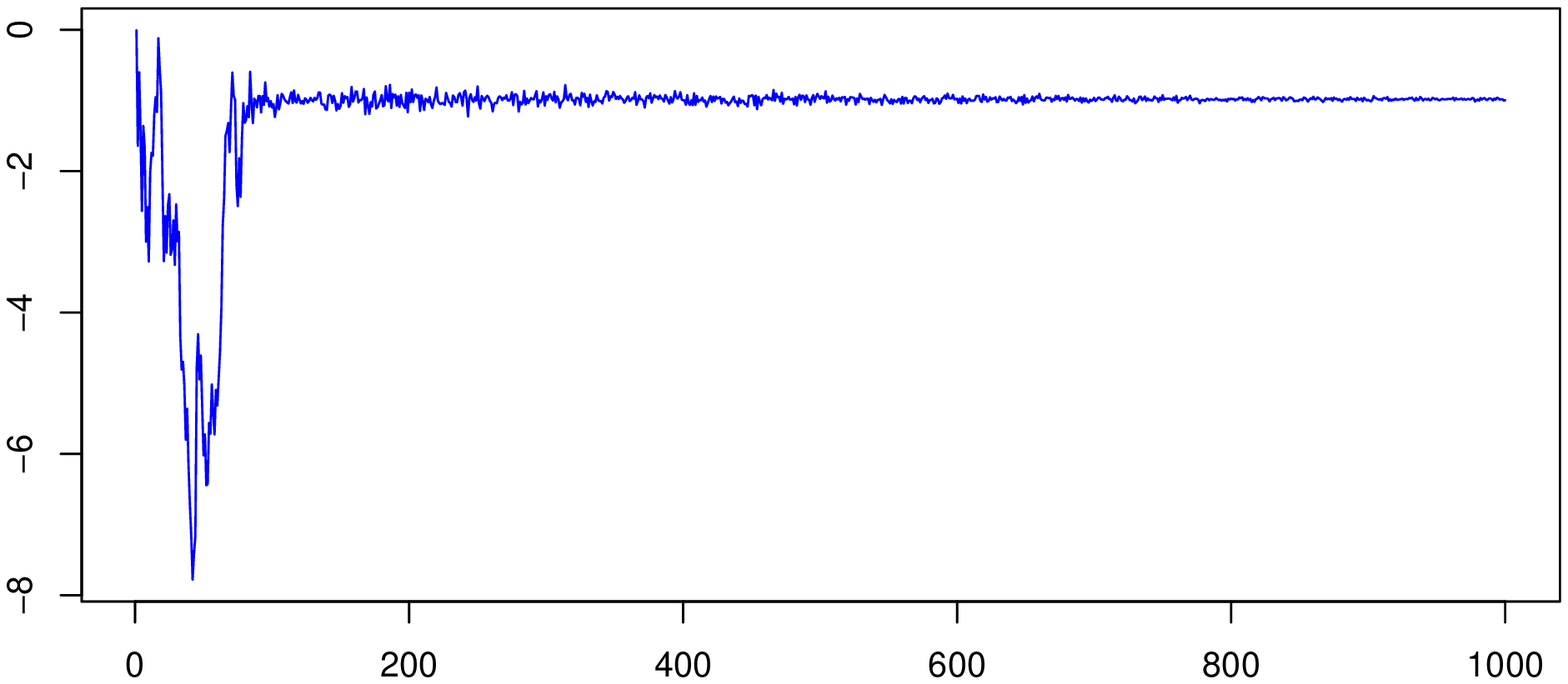}\\
\end{tabular}
\end{center}
\caption{SSA outputs for the MAP estimation of the Candy model parameters. The true parameters of the model were 
$\theta=(10,6,2,-1$), while the estimates are $\widehat{\theta}=(10.009,6.002,1.982,-0.996$).}
\label{resultsCandy}
\end{figure}

\begin{table}[!htbp]
\begin{center}
\begin{tabular}{|c|c|c|c|} 
\hline
\multicolumn{4}{|c|}{Summary statistics for the SSA Candy estimation}\\
\hline
Algorithm & $Q_{25}$ & $Q_{50}$ & $Q_{75}$ \\
\hline
SSA $\theta_d$ & 9.958 & 9.988 & 10.016\\
SSA $\theta_s$ & 5.983 & 6.009 & 6.040\\
SSA $\theta_f$ & 1.944 & 1.974 & 2.012\\
SSA $\theta_r$ & -1.017 & -0.985 & -0.962\\
\hline
\end{tabular}
\end{center}
\caption{Empirical quartiles for the SSA Candy model estimation.}
\label{tableCandy}
\end{table}

\subsection{Cosmology real data application: a point process model for fitting the galaxies distribution}
\noindent
The galaxies are not spread uniformly in our Universe. Their position exhibits an intricate pattern made of filaments and clusters~\cite{MartSaar02}. Knowing the positions of the galaxies, while assuming a Bayesian working framework, pattern detectors were built for filaments~\cite{StoiEtAl15,TempEtAl14,TempEtAl16} and more recently for clusters~\cite{TempEtAl18}.\\

\noindent
The previous detectors do not assume any particular model for the spatial distribution of galaxies. Now, given the detected structure, fitting a statistic model to the observed field of galaxies becomes a natural question. As a continuation of the application presented in~\cite{StoiEtAl17}, we show that the ABC Shadow posterior sampling algorithm and the SSA algorithm for parameter estimation are tools to be considered in such a task.\\

\noindent
Figure~\ref{galacticFilaments} pictures a sample of the considered data. It represents a cube of side $30$ $h^{-1}$ Mpc from SDSS (Sloan Digital Sky Survey) catalogue together with the induced filaments pattern~\cite{TempEtAl14}. 
Here the main axes of the filaments pattern are represented by a set of continuous curves that are called spines. More formally, the spines are ridge lines given by those regions where the filaments structures exhibited by the galaxies positions are the most aligned and the most connected. For full details regarding filaments detection and spines computation, the reader may refer to~\cite{TempEtAl14,TempEtAl16}.\\

\begin{figure}[!htbp]
\begin{center}
\includegraphics[width=14cm]{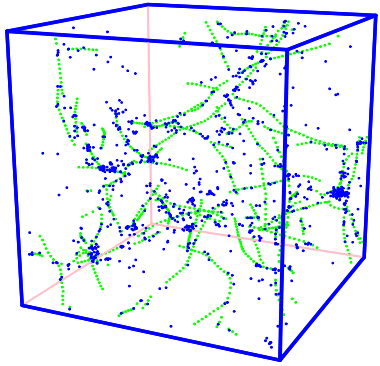} \end{center}
\caption{Galaxies positions (blue) and the induced filaments pattern or spines (green).}
\label{galacticFilaments}
\end{figure}

\noindent
Fitting a model to the galaxy distribution is extremely complex task~\cite{MartSaar02}. The authors~\cite{TempEtAl14,TempEtAl14a,TempEtAl16} suppose and infer that the galaxies tend to be close to the main axes of the filaments pattern, while forming clusters, similar to pearl on a necklace. Therefore, in the following, we consider a point process model that controls the number of galaxies, their proximity to the filaments network spine and their mutual interactions.\\

\noindent
Such a model can be represented by the point process given by the following probability density
\begin{equation}
p(\yy|\theta,F) \propto \beta_1^{n(\yy)} \beta_2^{d_{F}(\yy)} \gamma^{-a_{r}(\yy)}
\label{galaxyAreaInteraction}
\end{equation}
with the model parameters vector given by 
\begin{equation*}
\theta = (\log\beta_1,\log\beta_2,\log\gamma),    
\end{equation*}
and the sufficient statistics vector
\begin{equation*}
t(\yy) = (n(\yy),d_{F}(\yy), a_{r}(\yy)).
\end{equation*}

\noindent
The parameter $\beta_1 > 0$ controls the statistics $n(\yy)$ which represents the total number of galaxies in the configuration $\yy$. The parameter $\beta_2 > 0$ controls the proximity of the galaxies centres from the observed filaments pattern $F$. Its associate sufficient statistic is~:
\begin{equation*}
d_{F}(\yy) = - \sum_{i=1}^{n(\xx)} d(y_i,F)
\end{equation*}
with $d(y_i,F)$ the minimum distance from the galaxy position $y_i$ to the spines pattern $F$. The parameter $\gamma > 0$ produces repulsive ($\gamma < 1$) or clustering ($\gamma >1$) interactions among galaxies. The corresponding sufficient statistics is 
\begin{equation*}
a_{r}(\yy) = \frac{3A(\xx)}{4 \pi r^3},  A(\yy) = \nu[\bigcup_{i=1}^{n(\xx)}b(y_i,r)].
\end{equation*}
Here $b(y,r)$ is the ball centred in $y$ with radius $r$. So, $A(\yy)$ represents the volume of the object resulting from the set union of the spheres of radius $r$ and that are centred in the points given by the configuration $\yy$. The division of $A(\yy)$ by the volume of a sphere, allows to interpret the statistic $a_{r}(\xx)$ as the number of points or spheres needed to form a structure with volume $A(\yy)$.\\

\noindent
The proposed model~\eqref{galaxyAreaInteraction} is an inhomogeneous area-interaction process. The inhomogeneity governs the distribution of the galaxies with respect to the filaments pattern, while the area-interaction component controls the cluster formation among the galaxies. For further reading regarding the properties of the point process model we recommend~\cite{BaddLies95}.\\

\noindent
The sufficient statistics of the model are computed from the considered data set for various interaction radii. They are represented in the Table~\ref{tableGalaxy}. The total number of points in the observed volume and the distance to the filaments pattern do not depend on the interaction range.

\begin{table}[!htbp]
\begin{center}
\begin{tabular}{|c|c|c|c|c|c|c|c|} 
\hline
\multicolumn{8}{|c|}{Data for the galaxy pattern}\\
\hline
$r$            & 0.5 & 1 & 1.5 & 2 & 2.5 & 3 & 3.5\\
\hline
\multicolumn{8}{|c|}{$n(\yy) = 1024$, $d_{F}(\yy)= - \sum_{i=1}^{n(\yy)}d(y_i,F) = - 1180.05$}\\
\hline
$a_r(\yy)$ & 724.29 & 484.01 & 357.16 & 263.10 & 195.08 & 142.86 & 105.30\\
\hline
\end{tabular}
\end{center}
\caption{The observed sufficient statistics computed for the galaxy pattern, depending on the range parameter $r$.}
\label{tableGalaxy}
\end{table}

\noindent
First, for each radius value, the posterior distribution~\eqref{posteriorGibbs} associated to the model~\eqref{galaxyAreaInteraction} was maximised using the SSA Shadow algorithm. The prior density $p(\theta)$ was the uniform distribution on the interval $[-50,50]^3$. Each time, the auxiliary variable was sampled using $100$ steps of a MH dynamics~\cite{Lies00,MollWaag04}. The $\Delta$ and $m$ parameters were set to $(0.01,0.01,0.01)$ and $100$, respectively. The descending schedules for the temperature and the $\delta$ parameter, were fixed as in the previous examples. The algorithm was run for $10^6$ iterations. Samples were kept every $10^3$ steps. This gave a total of $1000$ samples.\\

\noindent
The obtained results are shown in the Figure~\ref{resultsDataSA}. The right column of the figure presents for each radius, the boxplots of the outputs of the SSA algorithm, associated to each model parameter. The model parameter behaviour can be analysed while $r$ increase. The $\beta_1$ parameter that is a baseline for the number of galaxies in the observed volume tend to stabilise. The evolution of the $\beta_2$ parameter indicates that the proximity to the filaments becomes more and more important while the considered interaction ranges increase. Almost the same thing can be stated for the behaviour of the $\gamma$ parameter. Still, there is a difference between the $\beta_2$ and the $\gamma$ parameters behaviour: the first one tends maybe to stabilise while the second one, exhibit maybe a decreasing tendency.\\

\noindent
The left column in the Figure~\ref{resultsDataSA} shows the evolution of the SSA algorithm for $r=2$. As for the previous cases, for high temperature values, the algorithm travels around the configuration space, while for low temperatures, it approaches the convergence regime. The obtained estimates for $\widehat{\theta}=(\widehat{\log\beta_1},\widehat{\log\beta_2},\widehat{\log\gamma)}$ are $(-0.33,0.98,4.57)$.\\

\begin{figure}[!htbp]
\begin{center}
\begin{tabular}{cc}
\includegraphics[width=6cm,height=4.5cm]{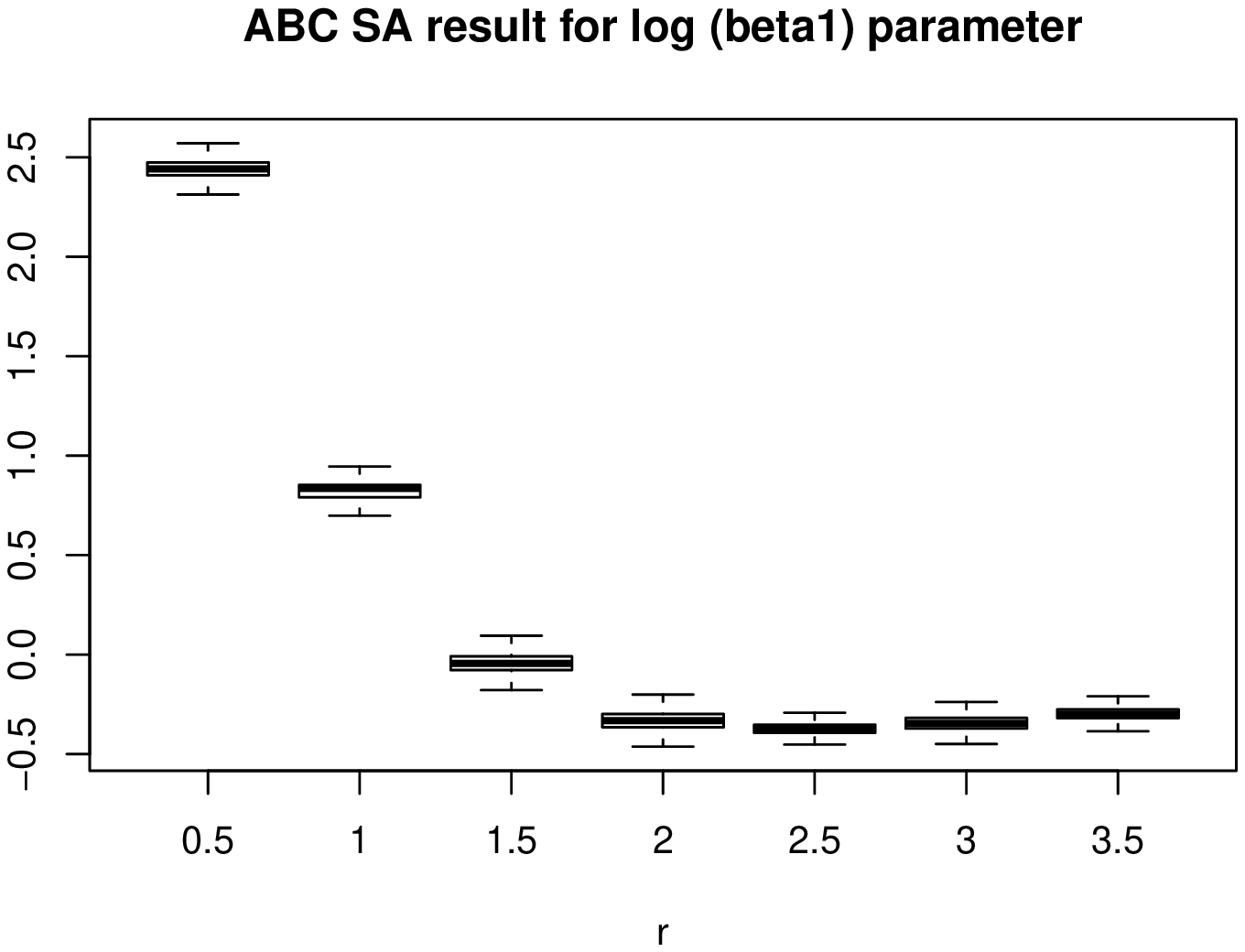} &
\includegraphics[width=6cm,height=4.5cm]{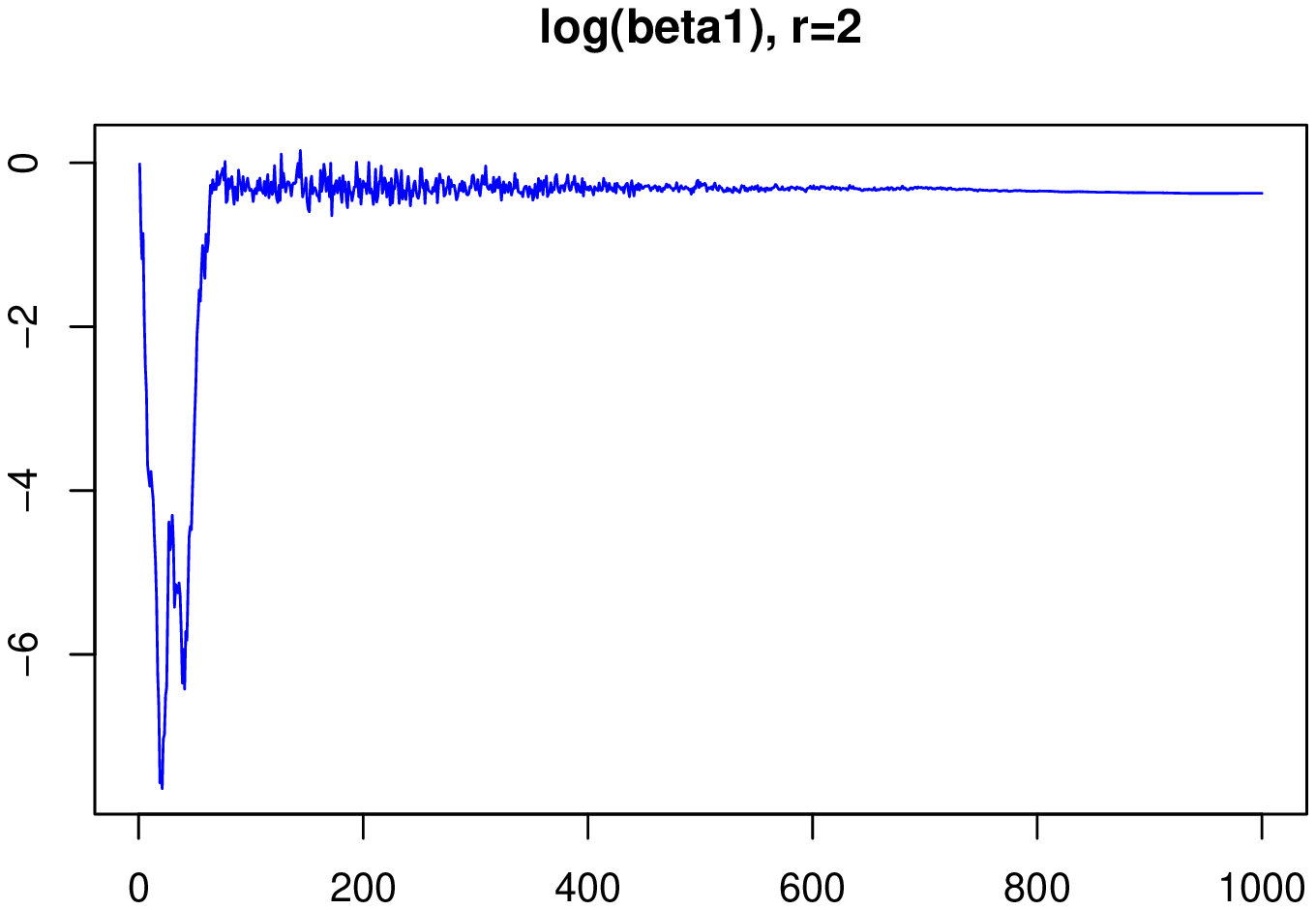}\\
\includegraphics[width=6cm,height=4.5cm]{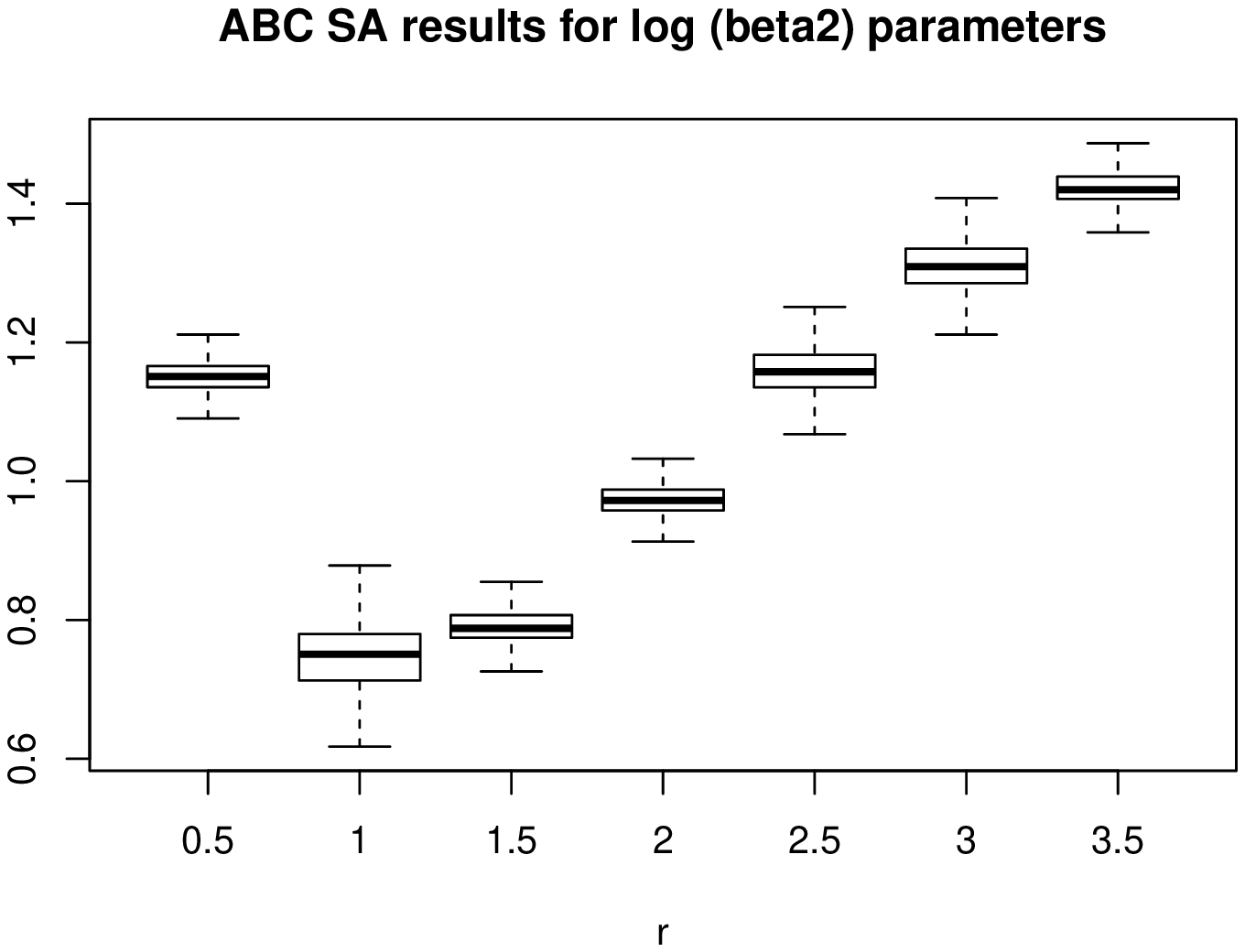} &
\includegraphics[width=6cm,height=4.5cm]{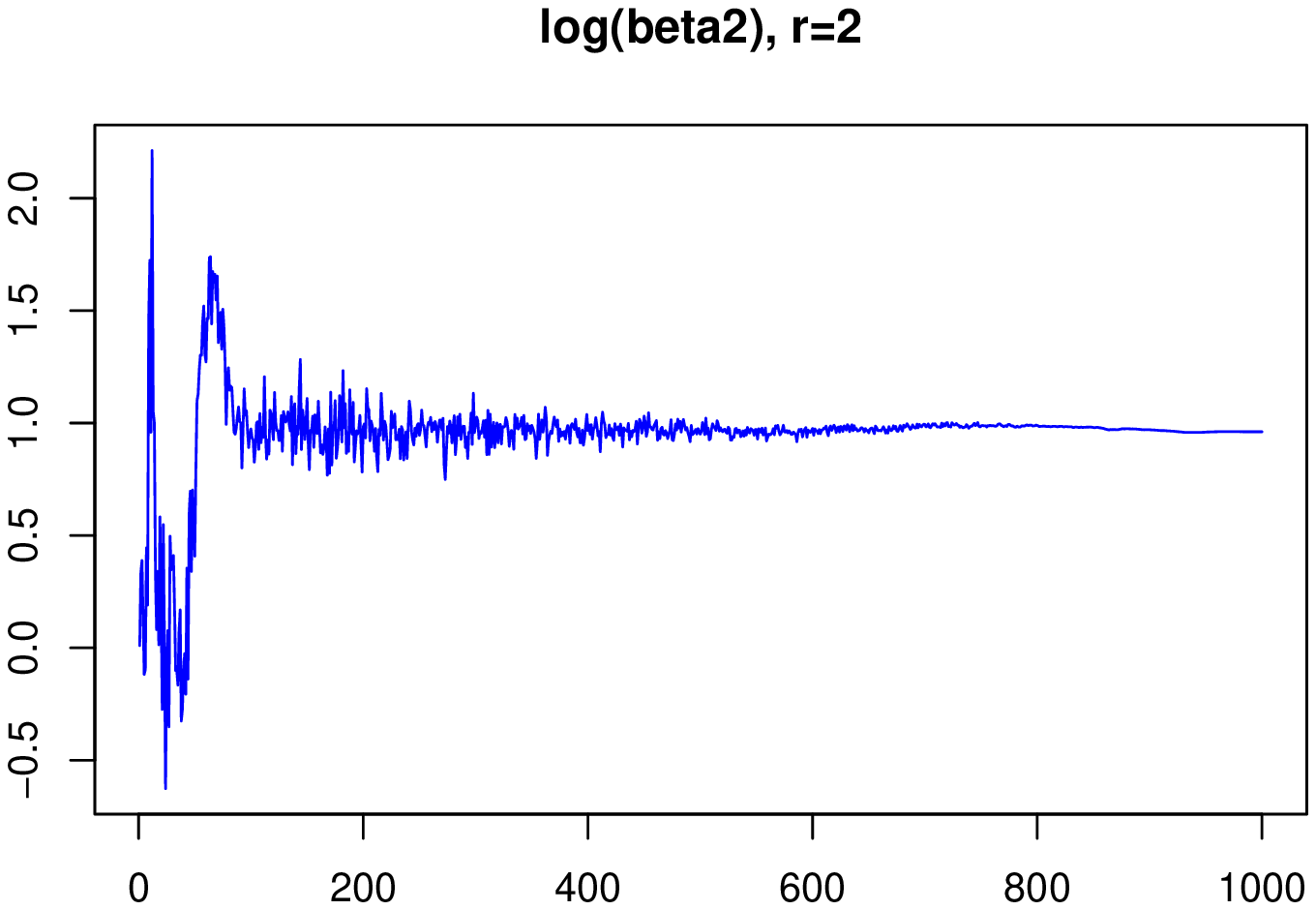}\\
\includegraphics[width=6cm,height=4.5cm]{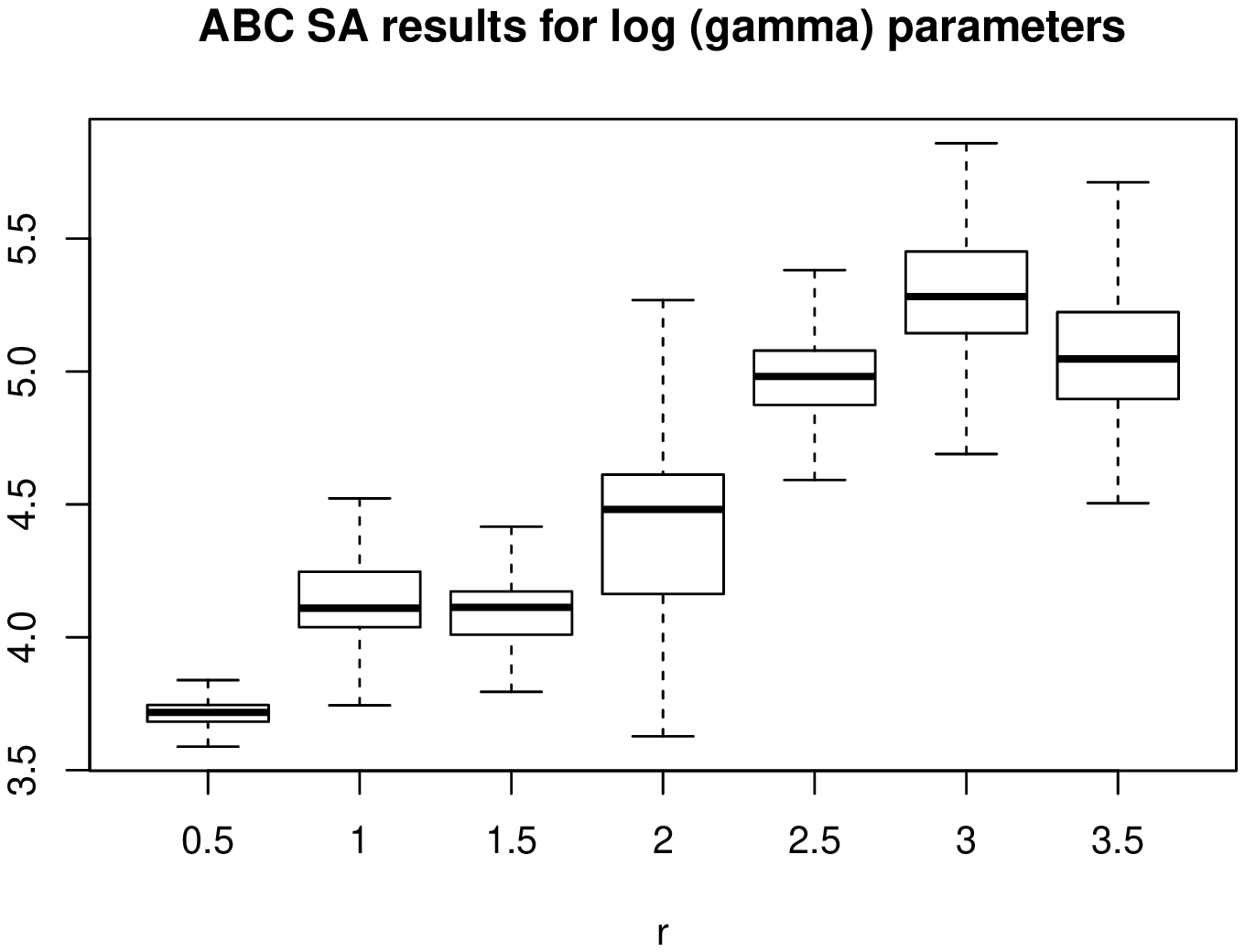} &
\includegraphics[width=6cm,height=4.5cm]{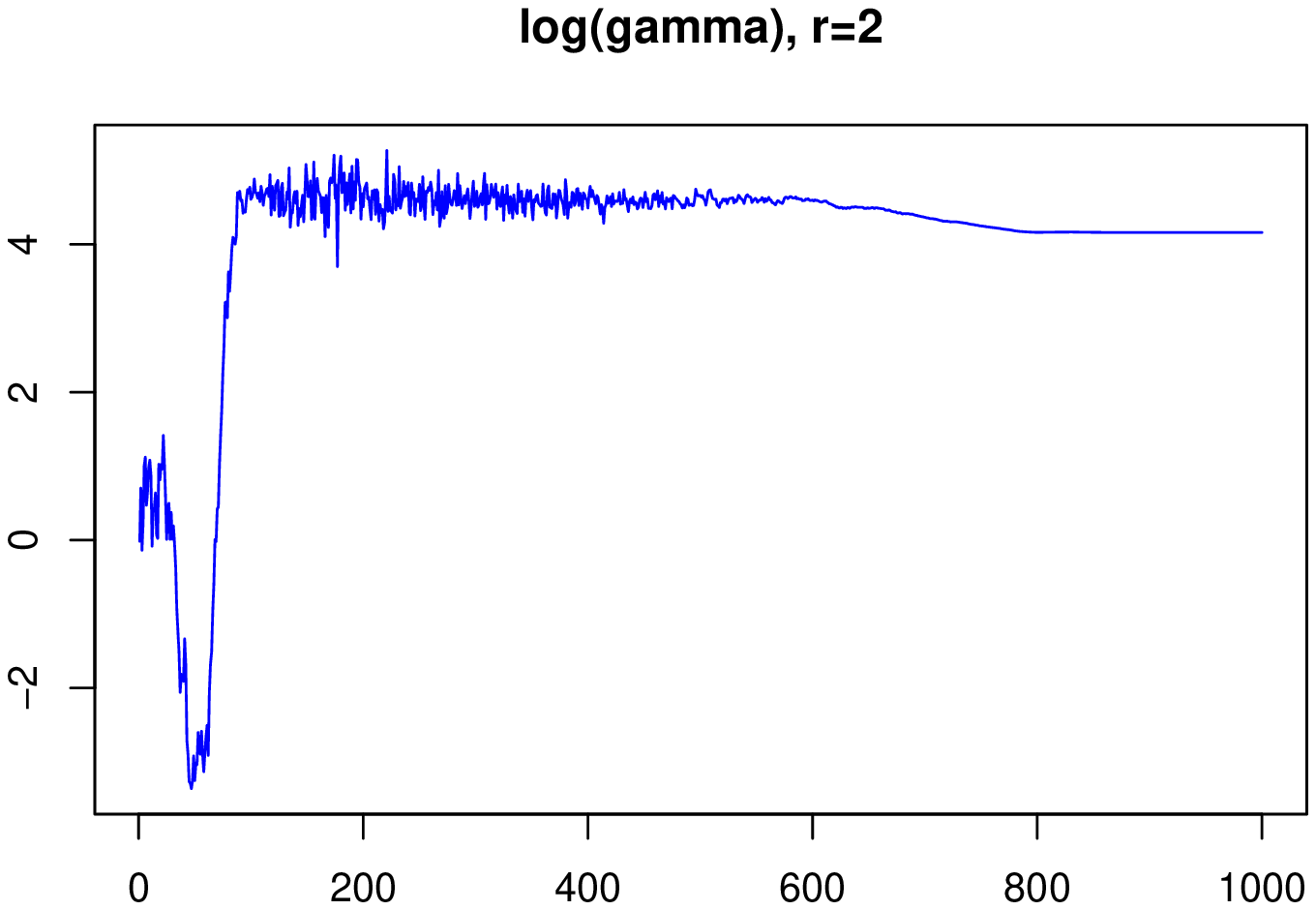}\\
\end{tabular}
\end{center}
\caption{SSA outputs for the MAP estimates computation of the inhomogeneous area-interaction model parameters fitted to the considered SDSS sample. Right column:  box plots of SSA algorithm outputs for each parameter depending on the interaction radius. Left column: the time series of the outputs of the SSA algorithm for $r=2$. }
\label{resultsDataSA}
\end{figure}

\noindent
Since the chosen prior distribution $p(\theta)$ was the uniform distribution over the compact parameter space $\Theta$, the SSA output is the Maximum Likelihood Estimate (MLE) restricted to $\Theta$. Hence, asymptotic errors may be derived following~\cite{Geye99,LiesStoi03,StoiEtAl17}. For each parameter, the asymptotic standard deviation and the Monte Carlo Standard error (MCSE) were computed respectively. The asymptotic standard deviation indicates the difference between the MLE estimate and the true model parameters. The SSA output can be also interpreted as a MCMCML estimate. Then the MCSE represents the difference between the MLE and its Monte Carlo counterpart. The obtained results are presented in Table~\ref{asymptoticsTable}, where for each parameter the first column indicates the asymptotic standard deviation and the second one, the MCSE. These values indicate a rather high quality of the estimation for models with interaction radius smaller than $3$ $h^{-1}$ Mpc. This may be explained by the fact that maybe a change of the clustering regime appears at these interaction ranges~\cite{EinaEtAl12}. Such a change is also suggested in Figure~\ref{resultsDataSA} where the interaction parameter behaves like reaching a maximum around $3$ $h^{-1}$ Mpc. 

\begin{table}[!htbp]
\begin{center}
\begin{tabular}{|c|c c|c c|c c|} 
\hline
\multicolumn{7}{|c|}{Asymptotic errors for the SSA estimates}\\
\hline
$r$ & $\sigma_{\log\beta_1}$ & $\sigma_{\log\beta_1}^{MC}$ & $\sigma_{\log\beta_2}$ &$\sigma_{\log\beta_2}^{MC}$ &
$\sigma_{\log\gamma}$ & $\sigma_{\log\gamma}^{MC}$ \\
\hline
0.5 & 0.04 & 1e-4 & 0.03 & 7e-5 & 0.07 & 2e-4\\
1 & 0.03 & 1e-4 & 0.03 & 9e-5 & 0.09 & 5e-4\\
1.5 & 0.05 & 1e-4 & 0.04 & 1e-4 & 0.12 & 1e-3\\
2 & 0.05 & 1e-4 & 0.04 & 2e-4 & 0.17 & 2e-3\\
2.5 & 0.05 & 2e-4 & 0.04 & 2e-4 & 0.24 & 5e-3\\
3 & 0.05 &  2e-4 & 0.04 & 5e-4 & 0.43 & 0.016\\
3.5 & 0.05 & 2e-4 & 0.04 & 9e-4 & 0.66 & 0.039\\
\hline
\end{tabular}
\end{center}
\caption{Asymptotics errors for the SSA MAP estimates of the model~\eqref{galaxyAreaInteraction} fitted to the considered cosmological sample. For each corresponding radius a model was fitted, and the asymptotic errors were computed for each model. For the computation of the MCSE $15\times10^3$ samples from the fitted model were used.}
\label{asymptoticsTable}
\end{table}

\noindent
In order to test the values and the significance of each of the model parameters, the ABC Shadow algorithm was used as in~\cite{StoiEtAl17}. The algorithm was run for $10^5$ iterations. Samples were kept every $100$ steps. This gave a total of $1000$ samples.\\

\noindent
The results obtained for the interaction radius $r=2$ are shown in the Figure~\ref{histDataABC}. From the obtained results, the marginals of each parameter posterior are approximated using an Epanechnikov kernel, and on this basis, the computed MAP is $\widehat{\theta}=(-0.29,0.95,4.57)$. The results are coherent since they are originated from an approximate distribution while fitting into the confidence intervals induced by the SSA estimation and its associate asymptotic standard error.\\

\begin{figure}[!htbp]
\begin{center}
\begin{tabular}{c}
\includegraphics[width=6cm,height=4.5cm]{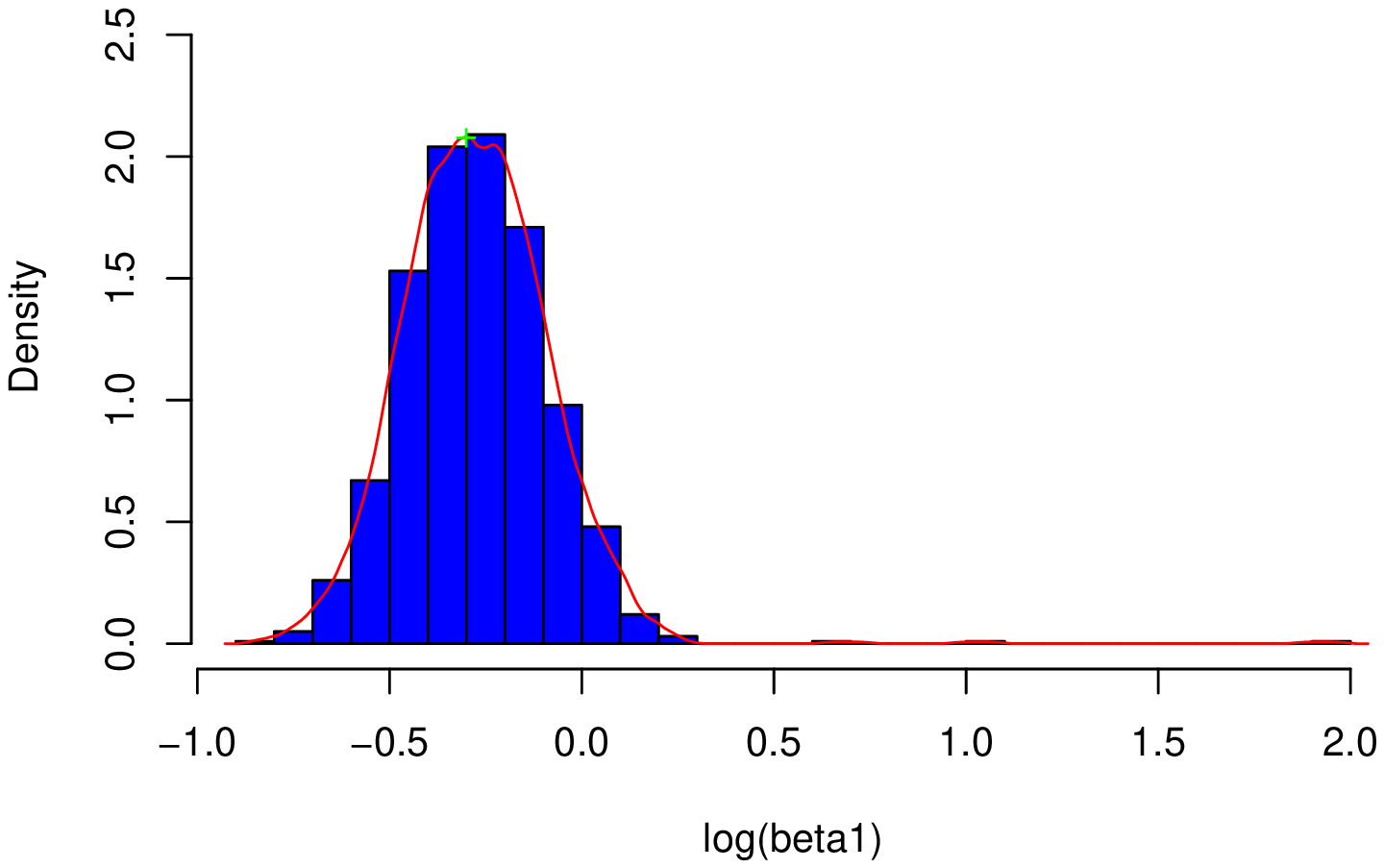}\\
\includegraphics[width=6cm,height=4.5cm]{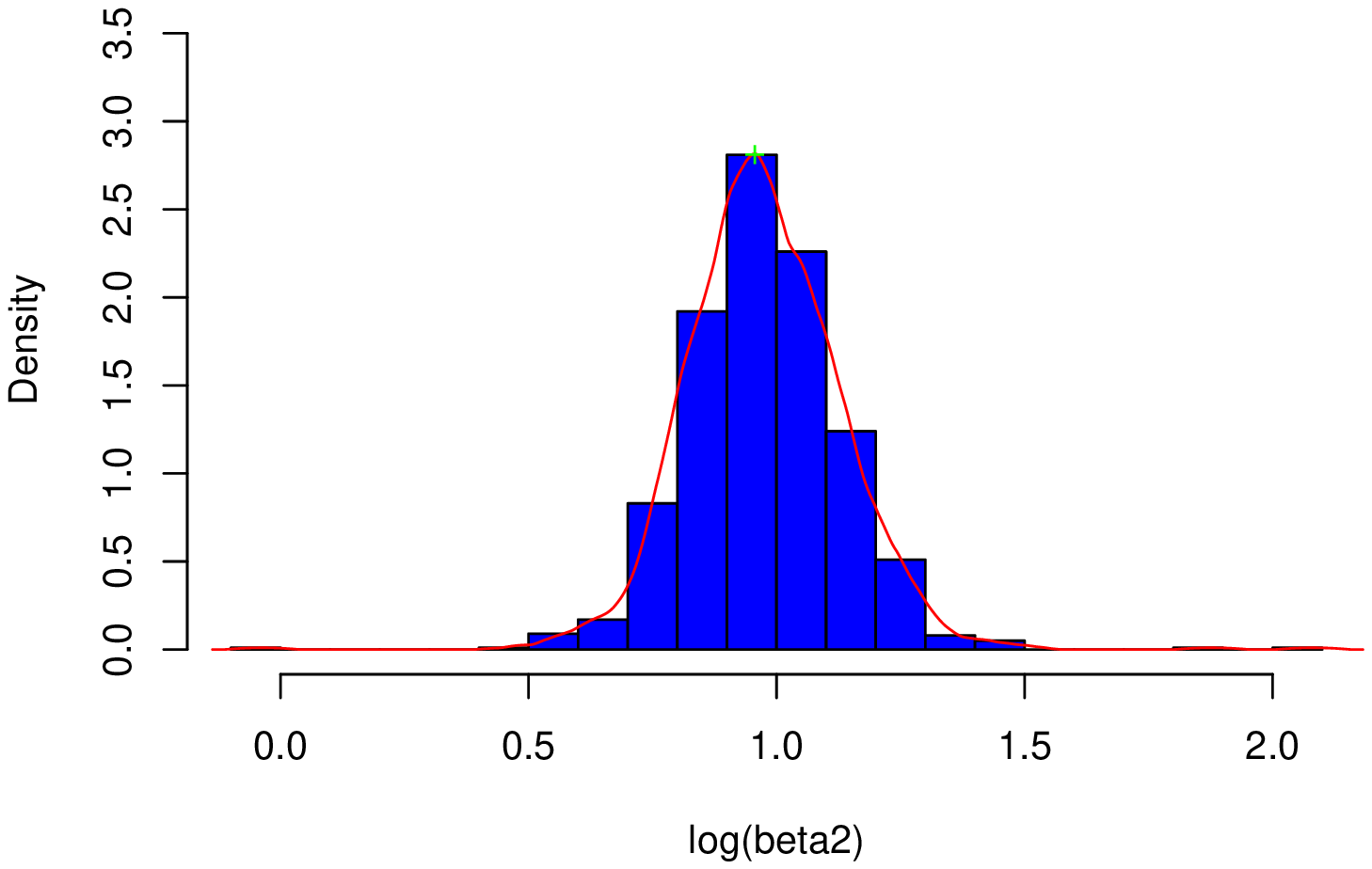}\\
\includegraphics[width=6cm,height=4.5cm]{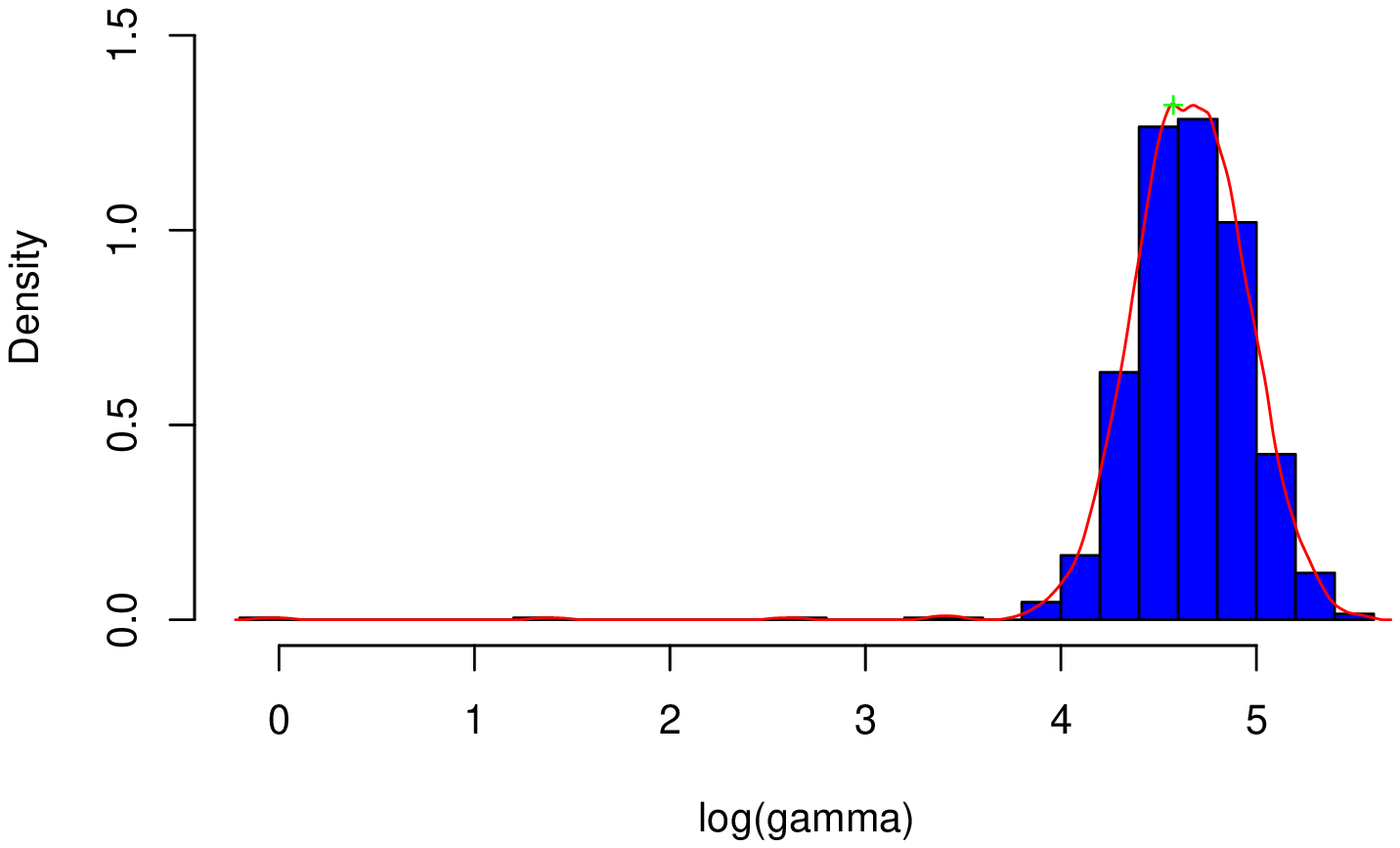}\\
\end{tabular}
\end{center}
\caption{ABC Shadow outputs for the approximate posterior sampling of the inhomogeneous area interaction model fitted to the SDSS sample (the range parameter is $r=2$).  The obtained MAP estimate is $\widehat{\theta}=(-0.29,0.95,4.57)$.}
\label{histDataABC}
\end{figure}

\noindent
In the following, based on the ABC Shadow approximation of the posterior of~\eqref{galaxyAreaInteraction} described previously, two statistical tests were conducted.\\

\noindent
First, a Student test was carried on to check whether the mean of the posterior distribution is different from the the SSA algorithm output. This test was conducted for each parameter, respectively. It used the marginal posterior samples given by the ABC Shadow. The obtained $p-values$ were all greater than $0.77$, so there is no evidence that the approximated posterior mean is different from the SSA output. This is a rather encouraging result, since the ABC Shadow is an approximate method, while the SSA exhibits convergence.\\

\noindent
Next, a second Student test was conducted in order to verify whether the obtained parameter values are significantly different from $0$. The obtained $p-$value for $\log\beta_1$ was $0.16$. For the parameters $\log\beta_2$ and $\log\gamma$ the corresponding $p-values$ were less than $10^{-5}$. The result indicates that the $\beta_1$ parameter is not significantly different from $1$. It also gives statistical significance of the following cosmological facts: the galaxies tend to be distributed close to the filaments while forming clusters, and only rarely being placed independently in our Universe.\\

\section{Conclusion and perspectives}
\noindent
This paper presents a global optimisation method, the Shadow Simulated Annealing (SSA) algorithm, that can be applied to a family of criteria that are not fully known. It can be applied to maximise posterior probability densities exhibiting normalising constants that are not available in analytic closed form. The SSA algorithm can be used with those posteriors provided by probability densities that are continuously differentiable with respect to their parameters. This is rather a strong hypothesis but often encountered in practice.\\

\noindent
The method provides convergence results towards the global optimum, that are equivalent with the results obtained for the simulated annealing whenever the optimisation criterium is entirely known~\cite{GemaGema84,HaarSaks91,HaarSaks92}.\\

\noindent
This work opens perspectives from a mathematical and application point of view. The mathematical challenge is to extend the family of criteria to which these methods apply. From an applied point of view, a thorough statistical study of the galaxies distribution in our Universe, based on the tools presented in this work, it is currently carried on by part of the authors of the paper.\\

\section*{Aknowledgements}
The authors are grateful to Elmo Tempel for providing the SDSS data set sample. Part of the work of the first author was supported by a grant of the Ministry of National Education and Scientific Research, RDI Program for Space Technology and Advanced Research - STAR, project number
513.


\bibliographystyle{plain}
\bibliography{sapp}

\begin{thebibliography}{10}

\bibitem{BaddEtAl16}
A.~J. Baddeley, E.~Rubak, and R.~Turner.
\newblock {\em Spatial Point Patterns: Methodology and Applications with {R}}.
\newblock Chapman and Hall/CRC Press, London, 2016.

\bibitem{BaddLies95}
A.~J. Baddeley and M.~N.~M. van Lieshout.
\newblock Area-interaction point processes.
\newblock {\em Annals of the Institute of Statistical Mathematics},
  47:601--619, 1995.

\bibitem{EinaEtAl12}
M.~Einasto, L.J. Liivam\"agi, E.~Tempel, E.~Saar, J.~Vennik, P.~Nurmi,
  M.~Gramann, J.~Einasto, E.~Tago, P.~Hein\"am\"aki, A.~Ahvensalmi, and V.J.
  Martinez7.
\newblock Multimodality of rich clusters from the sdss dr8 within the
  supercluster-void network.
\newblock {\em Astronomy and Astrophysics}, 542, 2012.

\bibitem{GemaGema84}
S.~Geman and D.~Geman.
\newblock Stochastic relaxation, {G}ibbs distributions and the {B}ayesian
  restoration of images.
\newblock {\em IEEE Transactions on Pattern Analysis and Machine Intelligence},
  6(6):721--741, 1984.

\bibitem{Geye99}
C.~J. Geyer.
\newblock Likelihood inference for spatial point processes.
\newblock In O.~Barndorff-Nielsen, W.S. Kendall, and M.N.M. van Lieshout,
  editors, {\em Stochastic Geometry, Likelihood and Computation}. CRC
  Press/Chapman and Hall, Boca Raton, 1999.

\bibitem{HaarSaks91}
H.~Haario and E.~Saksman.
\newblock Simulated annealing process in general state space.
\newblock {\em Advances in Applied Probability}, 23(4):866--893, 1991.

\bibitem{HaarSaks92}
H.~Haario and E.~Saksman.
\newblock Weak convergence of the simulaed annealing process in general state
  space.
\newblock {\em Annales Academiae Scientiarum Fennicae}, 17:39--50, 1992.

\bibitem{Iosifescu69}
M.~Iosifescu and R.~Theodorescu.
\newblock {\em Random Processes and Learning}.
\newblock Springer, 1969.

\bibitem{KellRipl76}
F.~P. Kelly and B.~D. Ripley.
\newblock A note on {Strauss}'s model for clustering.
\newblock {\em Biometrika}, 63(2):357--360, 1976.

\bibitem{makur16}
A.~Makur.
\newblock Introduction to {M}arkov mixing.
\newblock {\tt http://www.mit.edu/ $\sim$ a\textunderscore makur/docs/Markov
  Chain Mixing Coefficients Ergodicity.pdf}, 2016.

\bibitem{MartSaar02}
V.~J. Martinez and E.~Saar.
\newblock {\em Statistics of the galaxy distribution}.
\newblock Chapman and Hall, 2002.

\bibitem{MollWaag04}
J.~M{\o}ller and R.~P. Waagepetersen.
\newblock {\em Statistical inference and simulation for spatial point
  processes}.
\newblock Chapman and Hall/CRC, Boca Raton, 2004.

\bibitem{RobeTwee96}
G.~O. Roberts and R.~L. Tweedie.
\newblock Geometric convergence and central limit theorems for multidimensional
  {Hastings} and {Metropolis} algorithms.
\newblock {\em Biometrika}, 83(1):95--110, 1996.

\bibitem{StoiDescZeru04}
R.~S. Stoica, X.~Descombes, and J.~Zerubia.
\newblock A {Gibbs} point process for road extraction in remotely sensed
  images.
\newblock {\em International Journal of Computer Vision}, 57:121--136, 2004.

\bibitem{StoiEtAl07}
R.~S. Stoica, E.~Gay, and A.~Kretzschmar.
\newblock Cluster detection in spatial data based on {Monte Carlo} inference.
\newblock {\em Biometrical Journal}, 49(2):1--15, 2007.

\bibitem{StoiEtAl05}
R.~S. Stoica, P.~Gregori, and J.~Mateu.
\newblock Simulated annealing and object point processes : tools for analysis
  of spatial patterns.
\newblock {\em Stochastic Processes and their Applications}, 115:1860--1882,
  2005.

\bibitem{StoiEtAl17}
R.~S. Stoica, A.~Philippe, P.~Gregori, and J.~Mateu.
\newblock Abc shadow algorithm: a tool for statistical analysis of spatial
  patterns.
\newblock {\em Statistics and Computing}, 27:1225--1238, 2017.

\bibitem{StoiEtAl15}
R.~S. Stoica, E.~Tempel, L.~J. Liivam\"{a}gi, G.~Castellan, and E.~Saar.
\newblock Spatial patterns analysis in cosmology based on marked point
  processes.
\newblock In Statistics for~astrophysics. Methods and applications of~the
  regression, editors, {\em Statistics for astrophysics. Methods and
  applications of the regression}. European Astronomical Society Publication
  Series, EDP Sciences, 2015.

\bibitem{Stra75}
D.~J. Strauss.
\newblock A model for clustering.
\newblock {\em Biometrika}, 62:467--475, 1975.

\bibitem{TempEtAl14a}
E.~Tempel, R.~Kipper, E.~Saar, M.~Bussov, A.~Hektor, and J.~Pelt.
\newblock Galaxy filaments as pearl necklaces.
\newblock {\em Astronomy and Astrophysics}, 572 (A8), 2014.

\bibitem{TempEtAl18}
E.~Tempel, M.~Kruuse, R.~Kipper, T.~Tuvikene, J.~G. Sorce, and R.~S. Stoica.
\newblock Bayesian group finder based on marked point processes. method and
  application to the 2mrs data set.
\newblock {\em Astronomy and Astrophysics}, 618 (A61):1--18, 2018.

\bibitem{TempEtAl16}
E.~Tempel, R.~S. Stoica, R.~Kipper, and E.~Saar.
\newblock Bisous model - detecting filamentary pattern in point processes.
\newblock {\em Astronomy and Computing}, 16:17--25, 2016.

\bibitem{TempEtAl14}
E.~Tempel, R.~S. Stoica, E.~Saar, V.~J. Martinez, L.~J. Liivam\"agi, and
  G.~Castellan.
\newblock Detecting filamentary pattern in the cosmic web: a catalogue of
  filaments for the {SDSS}.
\newblock {\em Monthly Notices of the Royal Astronomical Society},
  438(4):3465--3482, 2014.

\bibitem{Tier94}
L.~Tierney.
\newblock {Markov} chains for exploring posterior distribution (with
  discussion).
\newblock {\em The Annals of Statistics}, 22(4):1701--1762, 1994.

\bibitem{VanLieshout94}
M.~N.~M. van Lieshout.
\newblock Stochastic annealing for nearest-neighbour point processes with
  application to object recognition.
\newblock {\em Adv. in Appl. Probab.}, 26(2):281--300, 1994.

\bibitem{Lies00}
M.~N.~M. van Lieshout.
\newblock {\em Markov Point Processes and their Applications}.
\newblock Imperial College Press, London, 2000.

\bibitem{LiesStoi03}
M.~N.~M. van Lieshout and R.~S. Stoica.
\newblock The {Candy} model revisited: properties and inference.
\newblock {\em Statistica Neerlandica}, 57:1--30, 2003.

\bibitem{LiesStoi06}
M.~N.~M. van Lieshout and R.~S. Stoica.
\newblock Perfect simulation for marked point processes.
\newblock {\em Computational Statistics and Data Analysis}, 51:679--698, 2006.

\bibitem{Winkler03}
Gerhard Winkler.
\newblock {\em Image analysis, random fields and {M}arkov chain {M}onte {C}arlo
  methods}, volume~27 of {\em Applications of Mathematics (New York)}.
\newblock Springer-Verlag, Berlin, second edition, 2003.
\newblock A mathematical introduction, With 1 CD-ROM (Windows), Stochastic
  Modelling and Applied Probability.

\end{thebibliography}

\end{document}